\theoremstyle{theorem}
\newtheorem{thm}{Theorem}[section]
\newtheorem{prop}[thm]{Proposition}
\newtheorem{lem}[thm]{Lemma}
\newtheorem{conj}[thm]{Conjecture}
\theoremstyle{definition}
\newtheorem{dfn}[thm]{Definition}
\newtheorem{ex}[thm]{Example}
\theoremstyle{remark}
\newtheorem{rmk}{Remark}[section]
\newcommand{\GL}{\operatorname{GL}}
\newcommand{\GSpin}{\operatorname{GSpin}}
\newcommand{\GSp}{\operatorname{GSp}}
\newcommand{\SO}{\operatorname{SO}}
\newcommand{\Gal}{\operatorname{Gal}}
\newcommand{\End}{\operatorname{End}}
\newcommand{\Cl}{\operatorname{Cl}}
\newcommand{\Aut}{\operatorname{Aut}}
\newcommand{\Hom}{\operatorname{Hom}}
\newcommand{\Pic}{\operatorname{Pic}}
\newcommand{\ch}{\operatorname{char}}
\newcommand{\cl}{\operatorname{cl}}
\newcommand{\Trd}{\operatorname{Trd}}
\newcommand{\SH}{\operatorname{Sh}}
\newcommand{\Br}{\operatorname{Br}}
\newcommand{\Nm}{\operatorname{Nm}}
\renewcommand{\sp}{\operatorname{sp}}
\newcommand{\KS}{\mathrm{KS}}
\newcommand{\ad}{\mathrm{ad}}
\newcommand{\Var}{\mathrm{Var}}
\newcommand{\Mot}{\mathrm{Mot}}
\newcommand{\Vect}{\mathrm{Vect}}
\newcommand{\Fil}{\mathrm{Fil}}
\newcommand{\dR}{\mathrm{dR}}
\newcommand{\et}{\text{\'et}}
\newcommand{\eps}{\varepsilon}
\newcommand{\A}{\mathbb{A}}
\newcommand{\F}{\mathbb{F}}
\newcommand{\Q}{\mathbb{Q}}
\newcommand{\Z}{\mathbb{Z}}
\newcommand{\R}{\mathbb{R}}
\newcommand{\C}{\mathbb{C}}
\newcommand{\G}{\mathbb{G}}
\newcommand{\Ss}{\mathbb{S}}
\newcommand{\PP}{\mathbb{P}}
\renewcommand{\L}{\mathbb{L}}
\newcommand{\V}{\mathbf{V}}
\newcommand{\Hh}{\mathbf{H}}
\newcommand{\Pp}{\mathbf{P}}
\newcommand{\LL}{\mathbf{L}}
\newcommand{\PI}{\boldsymbol{\pi}}
\newcommand{\GA}{\boldsymbol{\gamma}}
\newcommand{\XI}{\boldsymbol{\xi}}
\newcommand{\BE}{\boldsymbol{\beta}}
\newcommand{\ET}{\boldsymbol{\eta}}
\newcommand{\OO}{\mathcal{O}}
\newcommand{\Aa}{\mathcal{A}}
\newcommand{\HH}{\mathcal{H}}
\newcommand{\Sh}{\mathcal{S}}
\newcommand{\X}{\mathcal{X}}
\newcommand{\K}{\mathcal{K}}
\newcommand{\h}{\mathfrak{h}}
\title{Galois descent for motives: the K3 case}
\author{Angus McAndrew}
\date{Last updated: \today}
\begin{document}

\maketitle

\begin{abstract}
A theorem of Grothendieck tells us that if the Galois action on the Tate module of an abelian variety factors through a smaller field, then the abelian variety, up to isogeny and finite extension of the base, is itself defined over the smaller field.
Inspired by this, we give a Galois descent datum for a motive $H$ over a field by asking that the Galois action on an $\ell$-adic realisation factor through a smaller field.
We conjecture that this descent datum is effective, that is if a motive $H$ satisfies the above criterion, then it must itself descend to the smaller field.

We prove this conjecture for K3 surfaces, under some hypotheses.
The proof is based on Madapusi-Pera's extension of the Kuga-Satake construction to arbitrary characteristic.
\end{abstract}

\section{Introduction}

The study of descent phenomena can be traced back to some of the earliest mathematicians.
A common proof of the irrationality of $\sqrt{2}$ famously relies on Fermat's method of infinite descent.
Weil, in his seminal work \cite{weil56}, gives a criterion for a variety $X$ over a field $K$ to descend to some subfield $k$.
That is, for there to exist some variety $X_0/k$ such that $X_0 \otimes_k K \cong X$.
This criterion can be phrased in terms of some data associated to the Galois group $\Gal(K/k)$.
We refer to such a criterion as a \emph{Galois descent datum}.
Furthermore, since it is shown that this criterion does indeed imply descent, we say that this datum is \emph{effective}.

Another important example of such a descent datum is the following for abelian varieties.
\begin{dfn}
Let $K/k$ be a regular field extension, that is $K/k$ is finitely generated and separable, and $k$ is algebraically closed in $K$.
Let $A/K$ be an abelian variety.
For $\ell \neq \ch(K)$, let $\rho: \Gal_K \to \GL(V_\ell)$ denote the Galois representation on the $\ell$-adic Tate module.
We say $A$ satisfies \emph{$K/k$-Galois-$\ell$-descent} if $\rho(\Gal(K^s/k^sK))$ is trivial.
\end{dfn}
Note that the criterion only references the separable closure $k^s$, hence at best we can draw conclusions about descent up to a finite extension of $k$.
The following theoreom is due to Grothendieck.
\begin{thm}[\cite{oort73}]\label{prop:grothoort2}
Let $A/K$ satisfy $K/k$-Galois-$\ell$-descent.
Then there exists $A_0/k^s$ such that there exists an isogeny $A_0 \otimes_{k^s} K^s \to A \otimes_K K^s$.
\end{thm}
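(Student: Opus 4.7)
The plan is to spread the abelian variety out to a family over a geometric model of $K/k$, translate the Galois-$\ell$-descent hypothesis into the statement that the geometric monodromy of that family is trivial, and then invoke a classical isotriviality theorem to conclude that the family is, up to isogeny, constant.

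First I would pick a smooth, geometrically integral variety $U/k$ whose function field is $K$ (possible since $K/k$ is finitely generated and regular), and, after shrinking $U$, spread $A$ out to an abelian scheme $\Aa \to U$. Because $K/k$ is regular, $U_{k^s} \to \operatorname{Spec} k^s$ remains geometrically integral, and its generic point is $\operatorname{Spec}(k^s K)$. The standard homotopy exact sequence gives
\[
1 \to \pi_1(U_{k^s},\bar\eta) \to \pi_1(U,\bar\eta) \to \Gal(k^s/k) \to 1,
\]
where $\bar\eta = \operatorname{Spec} K^s$, and the representation of $\pi_1(U,\bar\eta)$ on $V_\ell(\Aa_{\bar\eta})$ agrees with $\rho$ after identifying $\pi_1(U,\bar\eta)$ with a quotient of $\Gal(K^s/K)$. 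Thus the geometric monodromy representation $\pi_1(U_{k^s},\bar\eta) \to \GL(V_\ell)$ is precisely the restriction of $\rho$ to the image of $\Gal(K^s/k^sK)$, which is trivial by hypothesis.

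Next I would invoke the theorem of Grothendieck (see SGA 7, or the formulation used by Oort) which asserts that an abelian scheme over a normal variety over an algebraically closed field with trivial geometric monodromy on some $V_\ell$ is isogenous to a constant abelian scheme. Applied to $\Aa_{k^s} \to U_{k^s}$, this yields an abelian variety $A_0/k^s$ and an isogeny $A_0 \times_{k^s} U_{k^s} \to \Aa_{k^s}$ of abelian schemes over $U_{k^s}$. Restricting to generic fibers produces the desired isogeny $A_0 \otimes_{k^s} K^s \to A \otimes_K K^s$.

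The main obstacle is the isotriviality step: extracting a constant model from triviality of the monodromy is not formal and in modern treatments rests on Faltings' isogeny theorem, which says that for abelian varieties over a finitely generated field the $V_\ell$ as Galois representation determines the isogeny class. Concretely, trivial geometric monodromy allows one to identify $V_\ell(\Aa_{\bar\eta})$ with $V_\ell(\Aa_s)$ for every closed point $s \in U_{k^s}$ compatibly with Galois, so comparing the generic fiber over $k^sK$ with a well-chosen specialisation $\Aa_s$ over $k^s$ and applying Faltings produces the isogeny. Everything else in the argument — choosing the model, spreading out, and the exact sequence of fundamental groups — is essentially bookkeeping.
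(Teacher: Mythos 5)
Your reduction of the statement to a geometric monodromy statement---spreading $A$ out to an abelian scheme $\Aa \to U$ over a smooth geometrically integral model of $K/k$ and identifying $\rho(\Gal(K^s/k^sK))$ with the image of $\pi_1(U_{k^s},\bar\eta)$---is sound, and is in fact the point of view sketched at the end of the paper. The gap is in the step you yourself flag as the main obstacle: the ``classical isotriviality theorem'' you invoke is essentially the theorem being proved, and the justification you offer for it via Faltings' isogeny theorem does not work. First, $k$ is an arbitrary field here, so $k^sK$ need not be finitely generated over its prime field and Faltings/Zarhin do not apply. Second, and more fundamentally, over any field containing a separably closed field the isogeny theorem in the form ``$V_\ell$ as a Galois module determines the isogeny class'' is \emph{false}: if $E_1, E_2$ are non-isogenous elliptic curves over $\C$ and $K = \C(t)$, then $\Gal(K^s/K)$ acts trivially on both $V_\ell(E_{1,K})$ and $V_\ell(E_{2,K})$, so these are isomorphic as Galois modules without the curves being isogenous. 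Your comparison of $V_\ell(\Aa_{\bar\eta})$ with $V_\ell(\Aa_s)$ is a comparison of two \emph{trivial} representations of $\Gal(K^s/k^sK)$, which carries no information beyond dimension; run literally, your mechanism would ``prove'' that any two constant families of the same dimension are isogenous.

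What actually closes this gap---and what the paper's proof does---is the theory of the $K/k$-trace together with the Lang--N\'eron theorem. One takes the trace $(A_0/k, t)$, the final object among pairs with $t: A_0 \otimes_k K \to A$ of finite kernel, so that it suffices to show $t$ is an isogeny. Lang--N\'eron gives that $A(k^sK)/t(A_0(k^s))$ is a finitely generated group; the hypothesis $\rho(\Gal(K^s/k^sK))=1$ forces $A(K^s)[\ell^n] = A(k^sK)[\ell^n]$ for all $n$, and since a finitely generated group has finite $\ell$-primary torsion, the $\ell$-divisible groups of $A$ over $K^s$ and of $A_0$ over $k^s$ have the same corank. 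Hence $\dim A_0 = \dim A$ and $t$ is an isogeny. This trace-theoretic input is exactly the correct replacement for the Tate-conjecture-style argument you propose; it cannot be bypassed by citing Faltings. (In characteristic $0$ one could alternatively argue via the theorem of the fixed part for variations of Hodge structure, but that route also does not pass through the isogeny theorem and does not cover positive characteristic.)
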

That is, the $K/k$-Galois-$\ell$-descent criterion for abelian varieties is effective, up to isogeny.
In fact the theorem can be improved to give an isomorphism, rather than isogeny, in the following cases:
\begin{itemize}
\item $A$ is an elliptic curve.
\item $K$ is characteristic 0.
\item $A$ is ordinary.
\end{itemize}
One may wish to generalise this to an arbitrary variety $X$ over $K$.
Rather than the Tate module, we have the $\ell$-adic \'etale cohomology, $H^\ast(X_{\bar K},\Q_\ell)$, which carries a Galois action.
One may then ask if $X$ descends to a subfield $k \subseteq K$ when this Galois representation satisfies an appopriate condition.
However, for general varieties there are generally not enough algebraic maps to be able to draw a conclusion about descent by looking at the cohomology alone.
Instead, when our cohomological hypothesis is satisfied, we might look for the variety to admit an algebraic correspondence with one that is defined over a smaller field.
Since we ask for a condition on the cohomology, it is reasonable to ask for correspondences which induce isomorphisms in $\ell$-adic cohomology.

All this suggests that the natural generalisation is to a category of \emph{motives} over a field $K$.
By motive, we mean an object in any $\Q$-linear category through which cohomology theories on smooth projective varieties factor.
In particular, due to the cohomological criterion, we consider the category $\Mot_{K,\ell}$ of motives \emph{modulo $\ell$-adic equivalence}.
We will motivate and define this category in Section \ref{sec:motives}.

Given a motive $H/K$, we denote $H_\ell$ the $\ell$-adic realisation.
This $\Q_\ell$ vector space is equipped with a Galois representation $\rho: \Gal_K \to \Aut(H_\ell)$.
Thus we can extend the above definition for abelian varieties and say a motive $H$ satisfies \emph{$K/k$-Galois-$\ell$-descent} for $K/k$ a regular extension if $\rho(\Gal(K^s/k^sK))$ is trivial.
We are then lead to conjecture that this criterion is effective.
\begin{conj}\label{conj:intro}
Let $H$ in $\Mot_{K,\ell}$ be a motive satisfying $K/k$-Galois-$\ell$-descent.
Then there exists a motive $H_0/k^s$ in $\Mot_{k^s,\ell}$ such that there exists an isomorphism $H_0 \otimes_{k^s} K^s \to H \otimes_K K^s$.
\end{conj}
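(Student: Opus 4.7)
The plan is to reduce Conjecture \ref{conj:intro} to a cycle-theoretic statement over a spreading-out of $H$. Write $K = k(S)$ for some smooth geometrically integral $k$-variety $S$ with $k$ algebraically closed in $K$. After shrinking $S$, I would spread $H$ out to a motivic family over $S$: a smooth projective morphism $f: X \to S$ together with a fibrewise idempotent correspondence $\pi$ and a Tate twist whose generic fibre recovers $H$. The $\ell$-adic realisation of such a family is a lisse $\Q_\ell$-sheaf $\mathcal{H}_\ell$ on $S$, and the $K/k$-Galois-$\ell$-descent hypothesis translates precisely into the statement that the geometric monodromy representation $\pi_1^{\et}(S_{k^s}, \bar\eta) \to \Aut(\mathcal{H}_{\ell, \bar\eta})$ is trivial; equivalently, $\mathcal{H}_\ell$ is geometrically constant on $S_{k^s}$.

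Next, pick any point $s \in S(k^s)$ and let $H_0$ denote the fibre of the motivic family at $s$, a motive in $\Mot_{k^s,\ell}$. Geometric constancy of $\mathcal{H}_\ell$, trivialised using $s$ as a base point, yields an isomorphism of Galois representations
\[
(H_0)_\ell \otimes_{k^s} K^s \xrightarrow{\sim} H_\ell \otimes_K K^s.
\]
The remaining task is to upgrade this $\ell$-adic isomorphism to an isomorphism in $\Mot_{K^s,\ell}$, i.e.\ to exhibit an algebraic correspondence between $H_0 \otimes_{k^s} K^s$ and $H \otimes_K K^s$ whose $\ell$-adic realisation is the given trivialisation.

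The main obstacle is precisely this last step, and it is essentially a Tate-conjecture-flavoured problem: one must show that the $\Gal(K^s / k^s K)$-invariant element of $\Hom((H_0)_\ell, H_\ell)$ furnished by the trivialisation lifts to an algebraic correspondence. Without a general Tate conjecture there appears to be no direct route, which is why the statement is posed as a conjecture rather than a theorem. The plan does, however, specialise cleanly to the two cases in which a sufficient form of Tate is already available: for abelian varieties, Faltings' isogeny theorem together with spreading-out recovers Theorem \ref{prop:grothoort2}; for K3 surfaces, the Kuga-Satake construction transfers the required correspondence to one between abelian varieties, reducing the problem to the previous case. This second reduction, executed integrally via Madapusi-Pera's extension of Kuga-Satake to arbitrary characteristic, is the approach pursued in the main theorem of this paper.
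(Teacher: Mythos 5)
The statement you are asked about is a conjecture; the paper offers no general proof of it, only the special cases of abelian varieties (Proposition \ref{prop:conjavs}) and of K3 surfaces under hypotheses (Theorem \ref{thm:main}). Your proposal is therefore correctly calibrated: you do not claim a proof, you reduce the problem to a cycle-lifting statement and identify the genuine obstruction. Your reduction --- spread $H$ out over a smooth model $S$ of $K/k$, observe that the descent hypothesis makes the $\ell$-adic realisation a geometrically constant lisse sheaf on $S_{k^s}$, take $H_0$ to be the fibre at a $k^s$-point, and then try to realise the resulting Galois-equivariant isomorphism $(H_0)_\ell \otimes K^s \to H_\ell \otimes K^s$ by an algebraic correspondence --- is a sensible framing, and your diagnosis that the last step is Tate-conjecture-flavoured is exactly why the statement is posed as a conjecture. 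One technical caveat: spreading out the idempotent $e$ so that it remains idempotent modulo $\ell$-adic equivalence in every fibre, and so that specialisation is compatible with the realisation, needs more care than a generic smooth projective spreading-out; this is harmless after shrinking $S$ but should be said.

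Where your account diverges from the paper is in how the known cases actually avoid the Tate conjecture. You attribute the abelian variety case to ``Faltings' isogeny theorem together with spreading-out,'' but the paper's Theorem \ref{thm:groth} is proved quite differently: it uses the $K/k$-trace $(A_0, t)$ and the Lang--N\'eron theorem to show directly that $t$ is an isogeny. This matters conceptually. The trace construction \emph{produces} the correspondence (the isogeny $t$) from the group structure of $A$, rather than first producing a Galois-invariant class in $\Hom((H_0)_\ell, H_\ell)$ and then lifting it; no Tate-type statement is invoked, and the argument works over regular extensions $K/k$ in any characteristic, where a Faltings-style isogeny theorem is not the relevant tool. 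Likewise, in the supersingular K3 case the paper sidesteps the lifting problem entirely because $\h(X)$ is a sum of Lefschetz motives, and in the remaining K3 cases the correspondence is obtained by descending the \emph{surface} itself via the Kuga--Satake period point, not by lifting a cohomological isomorphism. So while your strategy is a reasonable general template, the cases that are actually known are proved by constructions that manufacture the correspondence directly, and your proposal would benefit from making that distinction explicit.
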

Note that the subcategory generated by the motives attached to abelian varieties is precisely the isogeny category.
Hence Grothendieck's theorem can be read as stating that Conjecture \ref{conj:intro} holds for abelian varieties.

Our main result is that the conjecture holds for K3 surfaces, with some hypotheses.
\begin{thm}\label{thm:main}
Conjecture \ref{conj:intro} holds for $X/K$ a K3 surface satisfying either
\begin{enumerate}[(1)]
\item $K$ is characteristic 0.
\item $X$ is ordinary in odd characteristic.
\item $X$ is supersingular in positive characteristic.
\end{enumerate}
\end{thm}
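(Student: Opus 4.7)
The plan is to reduce the conjecture for $X$ to Theorem \ref{prop:grothoort2} for the Kuga-Satake abelian variety $A$ of $X$, using Madapusi-Pera's results to ensure the Kuga-Satake correspondence is algebraic in the required generality. The three hypotheses of Theorem \ref{thm:main} are precisely those in which this reduction is currently tractable: characteristic $0$ is Madapusi-Pera's original theorem, while the ordinary and supersingular cases are where the Kuga-Satake correspondence is known to be algebraic in positive characteristic.

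First, I would fix a polarization on $X$ and replace the $H^2$ motive with its primitive part $P^2(X)$, which is Galois-stable and whose $\ell$-adic realization carries an $\SO$-valued representation. Since the polarization class itself is Galois-invariant, $X$ satisfies $K/k$-Galois-$\ell$-descent if and only if the $\SO$-representation on $P^2(X)$ has trivial restriction to $\Gal(K^s/k^s K)$. After replacing $K$ by a finite separable extension if necessary, Madapusi-Pera's Kuga-Satake construction produces an abelian variety $A/K$ together with an algebraic correspondence (modulo $\ell$-adic equivalence) embedding $P^2(X)$ as a direct summand of $\End(H^1(A_{\bar K}, \Q_\ell))$, compatibly with the Galois action.

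The crucial step is then to transfer the descent datum from $X$ to $A$: since the $\GSpin$-valued Galois representation on $H^1(A)$ covers the $\SO$-representation on $P^2$, the image of $\Gal(K^s/k^s K)$ in $\GSpin$ lies in the kernel of $\GSpin \to \SO$, a central one-dimensional torus, and after twisting by a suitable character $A$ itself satisfies $K/k$-Galois-$\ell$-descent. Theorem \ref{prop:grothoort2} then produces a descent $A_0/k^s$ of $A$, isomorphically rather than merely isogenously under each of the three hypotheses. Descending the Kuga-Satake correspondence along with $A$ cuts out a submotive $H_0 \subset \End(\Hh^1(A_0))$ in $\Mot_{k^s,\ell}$ whose base change to $K^s$ matches the primitive part of $X$; combining with the descended polarization class yields the required isomorphism. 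The main obstacle throughout is ensuring that the Kuga-Satake correspondence itself is algebraic and defined compatibly with the descent of $A$, which is the key content of Madapusi-Pera's work in characteristic $0$ and of its extensions (via the proof of the Tate conjecture for K3 surfaces and related crystalline techniques) in the ordinary and supersingular cases.
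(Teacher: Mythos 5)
Your overall strategy for cases (1) and (2) --- polarise, pass to $PH^2$, transfer the descent datum to the Kuga--Satake abelian variety via the central extension $1 \to \G_m \to \GSpin \to \SO \to 1$, and invoke Grothendieck's theorem --- is exactly the paper's route, and your observation that the image of $\Gal(K^s/k^sK)$ lands in $\ker(\GSpin\to\SO)$ is the correct first half of Lemma \ref{lem:desc}. But there are two genuine gaps. First, your final step ``descending the Kuga--Satake correspondence \ldots cuts out a submotive $H_0 \subset \End(\h^1(A_0))$'' presupposes that the Kuga--Satake correspondence is an \emph{algebraic} cycle, i.e.\ an actual morphism in $\Mot_{K,\ell}$, where morphisms are algebraic cycle classes modulo $\ell$-adic homological equivalence. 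This is a well-known open problem (the correspondence is only known to be absolute Hodge/motivated), and neither Madapusi-Pera's work nor the Tate conjecture for K3 surfaces supplies it. The paper sidesteps this entirely: it proves the stronger Theorem \ref{thm:K3descent} that the K3 surface \emph{itself} descends, by descending the point $\widetilde{\iota(s)}$ on the integral model $\Sh(\GSpin(L_d))$ (using that it embeds into the Siegel moduli space, so one only needs to descend $A$ together with its polarisation and level structure) and then pulling back along the period morphism $\iota$, which is an open immersion on the polarised locus. The isomorphism of motives is then induced by an isomorphism of varieties, and no algebraicity of Kuga--Satake is ever used. Second, your ``twisting by a suitable character'' is not available: one cannot twist an abelian variety by an arbitrary $\ell$-adic character and remain in the isogeny category. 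The paper instead shows the residual central image is already $\pm 1$: the spinor norm agrees with the symplectic similitude character, hence with the cyclotomic character, which is trivial on $\Gal(K^s/k^sK)$ because $k^s$ contains all roots of unity; so $\tilde\rho(g)^2=1$ and a single quadratic extension $K'/K$ kills the image.

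On the supersingular case you are also off track. The paper does not run the Kuga--Satake argument there at all (and could not upgrade the isogeny to an isomorphism, since a supersingular abelian variety has $p$-rank $0$, so Remark \ref{rmk:groth} does not apply). Instead it uses the Bragg--Yang isomorphism $\h^2(X)\cong\h^2(Y)$ for any two supersingular K3's together with the unirationality of supersingular Kummer surfaces to conclude $\h(X)=\L^{\otimes 0}\oplus\L^{\oplus 22}\oplus\L^{\otimes 2}$, which is defined over $\Z$; no descent hypothesis is needed. Your claim that the descent is ``isomorphic rather than merely isogenous under each of the three hypotheses'' is therefore false as stated for case (3), even though for the purely motivic Conjecture \ref{conj:intro} an isogeny would have sufficed.
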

The method of proof is quite different in the three cases, each being deduced from a far stronger result of a different flavour.

The proof in the supersingular uses a result of Bragg and Yang to deduce that the motive of the K3 surface in $\Mot_{K,\ell}$ has a particularly simple form in terms of the Lefschetz motive.
Thus, the motive attached to a supersingular K3 surface will in fact always admit a model over $\Z$, without even needing descent data.

In cases (1) and (2) we will in fact show that the K3 surfaces in question themselves descend.
\begin{thm}\label{thm:K3descent}
Let $K/k$ be a regular extension and $X/K$ a K3 surface.
Let $\ell \neq \ch K$ and $\rho: G_K \to \Aut(H^2(X_{K^s}, \Q_\ell))$ be the Galois representation on the $\ell$-adic cohomology.
Assume $\rho(K^s/k^sK) = 1$ and either (1) $K$ is charactersitic $0$, or (2) $X$ is ordinary in odd characteristic.
Then there exists $X_0/k^s$ and an isomorphism $\varphi: X_0 \otimes_{k^s} K^s \xrightarrow{\sim} X \otimes_K K^s$.
\end{thm}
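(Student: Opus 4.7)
The plan is to transport the descent hypothesis from $X$ to its Kuga-Satake abelian variety $A$, apply Theorem \ref{prop:grothoort2} there, and then pull the resulting descent back to the K3 side via the period map. After replacing $K$ by a finite Galois extension --- which preserves the Galois-$\ell$-descent hypothesis and is absorbed by the formulation of the conclusion --- we may assume $X$ carries a polarization of some degree $2d$ together with a suitable spin level structure, so that it defines a $K$-point $x$ of a fine moduli space $M$ of polarized K3 surfaces with level structure.

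Madapusi-Pera's extension of the Kuga-Satake period map to mixed characteristic embeds $M$ as an open subscheme of (a smooth integral model of) the Shimura variety $\Sh_{\GSpin}$ attached to the primitive K3 lattice. The image $\iota(x) \in \Sh_{\GSpin}(K)$ corresponds to a Kuga-Satake abelian variety $A/K$ with prescribed endomorphisms and level structure. By construction, the Galois representation on $V_\ell A$ factors through $\GSpin$, and its projection along $\GSpin \to \SO$ recovers the Galois representation on $H^2(X_{K^s},\Q_\ell)_{\mathrm{prim}}$. The $K/k$-Galois-$\ell$-descent hypothesis on $X$ therefore forces the restriction of the $V_\ell A$ representation to $\Gal(K^s/k^sK)$ to land in the central subgroup $\G_m \subset \GSpin$, i.e.\ to act by an $\ell$-adic character. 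After a further finite extension and a twist of $A$ by a suitable CM abelian variety over $k^s$ realising the inverse character, this character is trivialised and $A$ itself satisfies $K/k$-Galois-$\ell$-descent.

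Theorem \ref{prop:grothoort2} now produces $A_0/k^s$ with $A_0 \otimes_{k^s} K^s$ isogenous to $A \otimes_K K^s$. In case (1), $K$ has characteristic zero; in case (2), the Kuga-Satake abelian variety of an ordinary K3 surface is itself ordinary, by a theorem of Nygaard. In both situations Grothendieck's theorem upgrades the isogeny to an isomorphism. The polarization, spin level structure, and remaining PEL data descend alongside $A$, yielding $y_0 \in \Sh_{\GSpin}(k^s)$ with $y_0 \otimes_{k^s} K^s = \iota(x) \otimes_K K^s$. Since $\iota$ is an open immersion and $\iota(x) \in M(K)$, the point $y_0$ lies in $M(k^s)$, and the universal K3 over $M$ produces the required $X_0/k^s$ with $X_0 \otimes_{k^s} K^s \cong X \otimes_K K^s$.

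The main technical obstacle is the transition between descent for $X$ and descent for $A$. The central $\G_m$ in $\GSpin$ introduces a character of $\Gal(K^s/k^sK)$ that must be killed by an explicit twist defined over $k^s$, and the subsequent appeal to Theorem \ref{prop:grothoort2} must produce an honest isomorphism of abelian varieties rather than a mere isogeny. Controlling both simultaneously is precisely what forces the restriction to the characteristic zero and ordinary cases, and is where most of the work of the proof will be concentrated.
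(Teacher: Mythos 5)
Your overall architecture matches the paper's: transport the descent hypothesis to the Kuga--Satake abelian variety $A$, apply Grothendieck's theorem there, use the characteristic-zero/ordinary hypothesis to upgrade the resulting isogeny to an isomorphism, descend the auxiliary polarisation and level data, and pull the resulting $k^s$-point of $\Sh(\GSpin(L_d))$ back through the period map. The gap is at the step you yourself flag as the crux: killing the central character. You correctly observe that $\tilde\rho|_{\Gal(K^s/k^sK)}$ lands in the central $\G_m \subseteq \GSpin(L_d)$, hence acts by an $\ell$-adic character $\chi$, but your proposed remedy --- a further finite extension together with ``a twist of $A$ by a suitable CM abelian variety over $k^s$ realising the inverse character'' --- cannot work. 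Any abelian variety (indeed any motive) defined over $k^s$ has \emph{trivial} $\Gal(K^s/k^sK)$-action on its $\ell$-adic realisation, since $k^s$ is separably closed and a constant family has no geometric monodromy; so nothing over $k^s$ can realise a nontrivial $\chi^{-1}$. Nor does a finite extension of $K$ help unless $\chi$ has finite order, which you have not established: a priori $\chi$ could be an infinite-order character of the geometric Galois group valued in $\Z_\ell^\times$.

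The missing idea (Lemma \ref{lem:desc} in the paper) is the compatibility, furnished by Lemma \ref{lem:spinsymp}, between the spinor norm $\nu$ on $\GSpin(L_d)$ and the symplectic similitude character $\mu$ on $\GSp(\Cl(L_d),\psi_\delta)$. Since $\mu\circ\tilde\rho$ is the cyclotomic character and $\nu$ restricted to the central $\G_m$ is $x\mapsto x^2$, one gets $\chi(g)^2 = \chi_{\mathrm{cycl}}(g)$ for $g \in \Gal(K^s/k^sK)$; and $\chi_{\mathrm{cycl}}$ is trivial there because $k^s$ already contains all $\ell$-power roots of unity. Hence $\chi$ is at worst quadratic and dies after an extension $[K':K]\le 2$ --- no twist is needed. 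A secondary inaccuracy: the restriction to the characteristic-zero and ordinary cases is not forced by the central-character issue (which is resolved uniformly as above), but only by the need to upgrade Grothendieck's isogeny to an isomorphism via Remark \ref{rmk:groth}, so that the descended point of the Shimura variety is the actual image of $\widetilde{\iota(s)}$ rather than merely isogenous to it.
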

This uses the Kuga-Satake construction, which associates an abelian variety $A = KS(X)$ to $X$ with certain compatibilities.
The essential strategy is to prove that the $K/k$-Galois-$\ell$-descent passes from $X$ to $A$, and the existence of $A_0$ given by Proposition \ref{prop:grothoort2} implies the existence of $X_0$ as we seek.

The remaining cases of interest, which are not covered in this paper, would be those of K3 surfaces of general finite height.

In Section \ref{sec:motives} we give some background on the theory of motives and Grothendieck's theorem.
In Section \ref{sec:k3} we review the theory of K3 surfaces and the classical Kuga-Satake construction.
We cover the proof of the supersingular case here.
In Section \ref{sec:shimura} we cover the necessary inputs from Madapusi-Pera's work on Shimura varieties to extend the Kuga-Satake construction to arbitrary characterstic.
Section \ref{sec:thm} contains the proof of the remaining cases of the main theorem.

We are thankful to Keerthi Madapusi-Pera for clarifying his work and many helpful suggestions.
We are indebted to Ziquan Yang for reading an early version of this manuscript, offering several improvements and correcting many errors.
We are ever thankful to Jared Weinstein for the insightful questions and guidance.

\section{Abelian varieties and motives}\label{sec:motives}

\subsection{Grothendieck's theorem on CM abelian varieties}

The story of Grothendieck's theorem in fact begins with elliptic curves with complex multiplication.
Recall that we say an elliptic curve $E$ over a field $k$ has complex multiplication, CM, if $\End(E) \neq \Z$.
It is classical that if a complex elliptic curve $E/\C$ has CM, then there exists a number field $k$ and $E_0/k$ with an isomorphism $E_0 \otimes_k \C \cong E$.
See \cite{silverman94}, Theorem 4.3(a).

A similar theorem is true in positive characteristic.
That is, given an elliptic curve $E/K$ where $K$ is a field of characteristic $p$, there exists a finite field $\F_q$, $E_0/\F_q$, and a finite extension $[K':K]<\infty$ such that $E \otimes_K K'\F_q \cong E_0 \otimes_{\F_q} K'\F_q$.
This is sometimes phrased as saying CM elliptic surfaces in positive characteristic are \emph{isotrivial}.
See \cite{mumford70}, page 217.

In fact this can be extended to the case of abelian varieties.
First recall that a simple abelian variety $A$ has CM if $\End^0(A)$ is a CM field of degree $2 \dim A$.
Then an abelian variety $A$ has CM if it is isogenous to a product of simple CM abelian varieties.
The descent theorem on CM was extended to abelian varieties by Grothendieck, as espoused in \cite{oort73}.
The theorem will crucially use the Galois action on the Tate module.
Hence we must first make a certain definition about our field extensions to make sure the Galois theory behaves well.
\begin{dfn}
We say a finitely generated field extension $K/k$ is \emph{separable} if there exists a trascendence basis $B$ of $K$ such that $K/k(B)$ is a separable algebraic extension.
We say a finitely generated field extension $K/k$ is \emph{regular} if $K/k$ is separable and $k$ is algebraically closed in $K$.
\end{dfn}
We may now state Grothendieck's theorem.
\begin{thm}[\cite{oort73}, Thm 1.1]
Let $A/K$ be an abelian variety with CM.
Let $k$ be the prime field of $K$ such that $K/k$ is regular.
Then there exists an abelian variety $A_0/k^s$ with an isogeny $A_0 \otimes_{k^s} K^s \to A \otimes_K K^s$.
\end{thm}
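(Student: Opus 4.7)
The plan is to apply Theorem \ref{prop:grothoort2}, reducing the problem to verifying that a CM abelian variety $A/K$ satisfies $K/k$-Galois-$\ell$-descent, i.e.\ that $\rho(\Gal(K^s/k^sK))$ is trivial in $\GL(V_\ell(A))$. Since this criterion is insensitive to finite separable extensions of $K$ (neither $K^s$ nor $k^s$ changes), we may freely enlarge $K$ to assume every endomorphism of $A_{K^s}$ is already defined over $K$. The CM algebra $E := \End^0(A)$, a product of CM fields with $\dim_\Q E = 2\dim A$, then acts $K$-rationally, so the Galois action on $V_\ell(A)$ commutes with $E \otimes \Q_\ell$. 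As $V_\ell(A)$ is free of rank one over $E \otimes \Q_\ell$, the image of $\rho$ is forced to lie in the commutative group $(E \otimes \Q_\ell)^\times$; hence $\rho$ is abelian.

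Using regularity of $K/k$, the canonical restriction $\Gal(k^sK/K) \cong \Gal(k^s/k)$ exhibits $\Gal(K^s/k^sK)$ as the kernel of a quotient of $\Gal_K$, and we must show $\rho$ factors through this quotient. This is the main content of the main theorem of complex multiplication. In characteristic zero, Shimura--Taniyama reciprocity presents $\rho$ (up to a finite-order twist, which can be absorbed into a further finite extension of $K$) as an algebraic Hecke character on the reflex field of the CM type, a number field; the associated $\ell$-adic representation extends naturally to $\Gal(\bar\Q/\Q) = \Gal(k^s/k)$, so its pullback to $\Gal_K$ is trivial on $\Gal(K^s/k^sK)$. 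In positive characteristic, Honda--Tate theory reduces to abelian varieties over finite fields, where the image of Galois is generated by Frobenius and thus again factors through $\Gal(k^s/k)$.

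Once the Galois-$\ell$-descent criterion is verified, Theorem \ref{prop:grothoort2} directly supplies the abelian variety $A_0/k^s$ together with the desired isogeny $A_0 \otimes_{k^s} K^s \to A \otimes_K K^s$. The main obstacle is therefore not the final descent step but the preceding arithmetic input: one must give a uniform proof that the abelian image of $\rho$ comes from $\Gal(k^s/k)$, which requires combining the characteristic zero CM formalism with Honda--Tate in positive characteristic, and in both cases critically exploits that $E$ is a genuine CM algebra rather than a more general commutative subalgebra of $\End^0(A)$.
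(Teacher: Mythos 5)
Your overall skeleton matches the paper's: the theorem is deduced by first establishing the Galois criterion (the ``First Step'': $\rho(\Gal(K^s/k^sK))$ is finite, hence trivial after a finite separable extension) and then feeding it into the descent theorem, Theorem \ref{prop:grothoort2} (the ``Last Step''). Your handling of the second half is fine, as is the observation that the CM structure forces the image of $\rho$ to be abelian (with the minor caveat that when $A$ is isogenous to a power of a simple factor, $\End^0(A)$ is a matrix algebra, so you should pass to its centre or to a maximal commutative \'etale subalgebra before asserting that $V_\ell$ is free of rank one).

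The genuine gap is in your verification of the First Step. In characteristic zero you invoke Shimura--Taniyama reciprocity to present $\rho$ as an algebraic Hecke character of the reflex field; but the main theorem of complex multiplication applies to abelian varieties defined over number fields, and the assertion that $A$ --- a priori living over a finitely generated field $K$ of positive transcendence degree over $\Q$ --- admits such a model is essentially the conclusion you are trying to prove. (Characteristic zero can be repaired by first citing the classical fact that a CM abelian variety over $\C$ descends to $\overline{\Q}$, and then noting that the geometric monodromy is forced into the discrete, finitely generated group $\Aut(A_{0,\overline{\Q}})$ and hence has finite image; but that is a different argument from the one you wrote.) The positive-characteristic case is worse: Honda--Tate classifies isogeny classes of abelian varieties that are already over finite fields and provides no mechanism for ``reducing'' a CM abelian variety over a function field to one over a finite field --- that reduction, i.e.\ isotriviality of CM abelian varieties in characteristic $p$, is precisely the content of the theorem. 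What is needed at this point is a non-circular input: either the semisimplicity of the geometric monodromy group (an abelian group whose Zariski closure has semisimple identity component must have finite image), or the inductive argument over transcendence degree via the $K/k$-trace and the Lang--N\'eron theorem, which is the route the paper (following Oort) takes.
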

The proof of this is built from two theorems.
The first is to relate the CM condition to a property of the Galois action on the Tate module.
\begin{thm}[\cite{oort73}, `First Step']
Let $A/K$ be an abelian variety with CM, with $K$ a field regular over its prime field $k$.
For $\ell \neq \ch(K)$, let $\rho: G_K \to \GL(V_\ell)$ denote the Galois representation on the $\ell$-adic Tate module.
Then $\rho(\Gal(K^s/k^sK))$ is finite.
\end{thm}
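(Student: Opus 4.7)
The plan is to combine the abelianity of the Galois representation coming from the CM structure with the rigidity of CM abelian varieties in families over the algebraic closure of the prime field.

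I would first reduce to the case where $A$ is simple, so that $\End^0(A) = E$ is a CM field of degree $2\dim A$; the general case follows by isogeny decomposition and functoriality of $\rho$. A standard consequence of the CM hypothesis is that $V_\ell(A)$ is free of rank one over $E_\ell := E \otimes_\Q \Q_\ell$. Since endomorphisms commute with the Galois action, the image of $\rho$ lies in the commutant of $E_\ell$ in $\GL(V_\ell(A))$, which in this rank-one situation is just $E_\ell^\times$. In particular $\rho$ is abelian. If $K/k$ is algebraic then $k^sK = K^s$ and there is nothing to prove, so I may assume $K$ has positive transcendence degree over $k$. Using separability of $K/k$, I pick a smooth $k$-model $S$ with function field $K$ and spread $A$ out to an abelian scheme $\mathcal{A} \to U$ over a dense open $U \subseteq S$. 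By N\'eron--Ogg--Shafarevich applied to $\mathcal{A}$, the representation $\rho$ is unramified on $U$ and factors through $\pi_1^{\et}(U)$. The group $\Gal(K^s/k^sK)$ surjects onto the geometric subgroup $\pi_1^{\et}(U_{k^s})$, so it suffices to prove that the monodromy representation of $\pi_1^{\et}(U_{k^s})$ on $V_\ell$ has finite image.

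For this last step I would invoke the zero-dimensionality of the moduli of CM abelian varieties over $k^s$: up to isogeny, the simple CM abelian varieties over $k^s$ with a fixed CM type by $E$ form a finite set. After shrinking $U$ to arrange that the generic $E$-action extends to an $E$-action on $\mathcal{A}_U$, every geometric fibre of $\mathcal{A}_{U_{k^s}}$ is a CM abelian variety of the same type, and so belongs to this finite isogeny class. Standard rigidity should then produce a finite \'etale cover $U' \to U_{k^s}$ over which $\mathcal{A}$ becomes isogenous to a constant family; on $U'$ the monodromy on $V_\ell$ is trivial, so on $U_{k^s}$ it factors through the finite deck group $\pi_1^{\et}(U_{k^s})/\pi_1^{\et}(U')$, giving the desired finiteness.

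The principal obstacle I anticipate is the spreading-out of structure: the $E$-action is given only on the generic fibre, and one needs a rigidity argument (using the N\'eron model property and Zarhin-style statements about morphisms of abelian schemes) to extend it over $\mathcal{A}_U$, and to descend the resulting isogeny to a constant CM family compatibly with the $k^s$-structure. This is where the assumption that $k$ is the prime field, so that $k^s$ is small enough for the CM moduli to be truly zero-dimensional, does the essential work; without it, one would have to contend with genuine $k^s$-parameters for CM abelian varieties.
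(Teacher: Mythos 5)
The paper does not actually prove this statement---it is quoted from Oort, with only the one-line remark that the proof inducts on the transcendence degree of $K/k$---so your proposal must stand on its own, and it does not: the step you label ``standard rigidity'' is a genuine gap, and it is where essentially all of the content of the theorem lives. Knowing that every \emph{closed} geometric fibre of $\mathcal{A}_{U_{k^s}}$ lies in a fixed finite set of isogeny classes does not formally imply that $\mathcal{A}_{U_{k^s}}$ becomes isogenous to a constant abelian scheme over a finite \'etale cover; that passage from a fibrewise statement to a statement about the family (equivalently, about the geometric generic fibre) is exactly the assertion that $A_{\bar K}$ is isogenous to an abelian variety defined over $\bar k$, i.e.\ the conclusion of Grothendieck's theorem. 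The cardinality argument that usually powers such rigidity claims (``the classifying map to $\mathcal{A}_{g,d,N}$ lands in a countable set of points, hence is constant'') is unavailable precisely because $k$ is the prime field, so $\bar k$ is countable; and in characteristic $p$ the classifying map genuinely need not be constant: the universal family over a component of the supersingular locus of $\mathcal{A}_2$ is a non-isotrivial family of principally polarised abelian surfaces all of whose geometric fibres admit CM by a fixed quartic CM field, so your premise that the ``CM-by-$E$'' locus is zero-dimensional fails there. (That family is still isogeny-isotrivial, but proving this uses the structure theory of supersingular abelian varieties, not soft rigidity.) Relatedly, the input ``finitely many isogeny classes of simple CM abelian varieties over $k^s$ with a fixed type by $E$'' is only clear in characteristic $0$; over $\overline{\F}_p$ CM types do not classify isogeny classes and the finiteness requires a Honda--Tate argument.

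The earlier parts of your sketch are sound (reduction to the simple case; $V_\ell$ free of rank one over $E\otimes\Q_\ell$, so the image of $\rho$ is abelian; spreading out so that $\rho$ factors through $\pi_1^{\et}(U)$; surjectivity of $\Gal(K^s/k^sK)\to\pi_1^{\et}(U_{k^s})$), and they can be completed by a different final step that avoids the gap: the Zariski closure of the geometric monodromy is a normal subgroup of the arithmetic monodromy group, its identity component is semisimple (Grothendieck--Deligne semisimplicity for abelian schemes), and a connected semisimple subgroup of the commutative group $(E\otimes\Q_\ell)^\times$ is trivial; a compact $\ell$-adic group whose Zariski closure is finite is itself finite. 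Alternatively, one can follow Oort's actual route: use potentially good reduction of CM abelian varieties to extend $\mathcal{A}$ over a complete curve, and induct on transcendence degree by comparing the generic fibre with its specialisation at a closed point, where the endomorphism algebra can only grow.
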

The strategy of proof is to induct over transcendence degree, epxressing $K/k$ as a sequence of extensions of transcendence degree one.
\begin{rmk}
Note that the conclusion can always be improved to $\rho(\Gal(K^s/k^sK))$ being trivial if one passes to some $[K':K] < \infty$.
\end{rmk}
The next step in Grothendieck's argument is the following theorem.
\begin{thm}[\cite{oort73}, `Last Step']\label{thm:groth}
Let $A/K$ be an abelian variety with $K/k$ a regular extension.
For $\ell \neq \ch(K)$, let $\rho: G_K \to \GL(V_\ell)$ denote the Galois representation on the $\ell$-adic Tate module and assume $\rho(\Gal(K^s/k^sK)) = 1$.
Then there exists $A_0/k$ such that there exists an isogeny $A_0 \otimes_k K \to A$.
\end{thm}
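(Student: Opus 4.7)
My plan is to spread $A$ out to a family over a $k$-variety with function field $K$, translate the Galois hypothesis into the statement that the relative Tate module has trivial geometric monodromy, and then invoke a rigidity theorem for abelian schemes to conclude that the family is isotrivial up to isogeny. First, using that $K/k$ is finitely generated, I would pick a smooth geometrically integral $k$-variety $U$ with $k(U) = K$ and, shrinking $U$ as necessary, extend $A$ to an abelian scheme $\pi: \mathcal{A} \to U$. Fixing a geometric generic point $\bar\eta$, the \'etale fundamental group sits in an exact sequence whose kernel is $\pi_1^{\et}(U_{k^s}, \bar\eta) = \Gal(K^s/k^s K)$ and whose quotient is $\Gal(k^s/k)$. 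The lisse sheaf $R^1\pi_* \Q_\ell$ on $U$ corresponds to the representation $\pi_1^{\et}(U, \bar\eta) \to \GL(V_\ell(A))$, so the hypothesis $\rho(\Gal(K^s/k^s K)) = 1$ is exactly the statement that this local system becomes trivial after pullback to $U_{k^s}$.

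The core of the proof is to upgrade trivial geometric monodromy to isotriviality. I would invoke the following rigidity assertion: an abelian scheme over a normal base whose $\ell$-adic Tate module has trivial geometric monodromy is isogenous over the base to a constant family. Granting this, after possibly passing to a finite extension ensuring $U(k^s)$ is nonempty, I would pick $u_0 \in U(k^s)$ and set $A_0 := \mathcal{A}_{u_0}$; the rigidity assertion then supplies an isogeny $A_0 \times_{k^s} U_{k^s} \to \mathcal{A}_{k^s}$, and taking generic fibers yields $A_0 \otimes_{k^s} K^s \to A \otimes_K K^s$. Finally, Galois descent, using that the $\Gal(k^s/k)$-action on $V_\ell(A_0)$ induced via the isogeny is compatible with that on $V_\ell(A)$ (which factors through $\Gal(k^s/k)$ by hypothesis), descends $A_0$ and the isogeny from $k^s$ to $k$, up to isogeny, producing the desired $A_0/k$ with isogeny $A_0 \otimes_k K \to A$.

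The main obstacle is the rigidity step. In characteristic zero this is accessible via Artin's comparison with Betti cohomology: trivial monodromy of the $\ell$-adic local system implies constancy of the polarized variation of Hodge structure, which by the Torelli theorem for polarized abelian varieties forces the family to be isotrivial. In positive characteristic one instead appeals to Tate-Faltings-type results for abelian varieties over fields finitely generated over their prime field, where triviality of the Galois action on the Tate module between two fibers forces the fibers to be isogenous, and one must then propagate these pointwise isogenies to a global isogeny of families. A secondary technical point is descending $A_0$ from $k^s$ to $k$ itself, which I would handle via Weil's descent criterion applied to the compatible Galois action, using that any continuous $1$-cocycle into the automorphism group of an abelian variety is a coboundary up to isogeny.
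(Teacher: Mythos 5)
Your overall architecture (spread out to an abelian scheme over a smooth $k$-variety $U$, reinterpret the hypothesis as trivial geometric monodromy of $R^1\pi_*\Q_\ell$, deduce isotriviality up to isogeny) is genuinely different from the paper's proof, which never leaves the generic fibre: it takes $A_0$ to be the $K/k$-trace of $A$, invokes the Lang--N\'eron theorem to see that $A(k^sK)/t(A_0(k^s))$ is finitely generated, and then uses the triviality of $\rho$ on $\Gal(K^s/k^sK)$ to compare $\ell$-power torsion and conclude that the trace map is an isogeny. That argument is elementary modulo Lang--N\'eron and works uniformly in all characteristics.

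The gap in your version is the ``rigidity assertion,'' which as stated is essentially equivalent to the theorem, and your proposed justification in positive characteristic does not go through in the stated generality. You appeal to Tate--Faltings--Zarhin to turn an isomorphism of Tate modules between $A$ and a constant fibre $A_0\otimes K$ into an isogeny; but those results require the base field to be finitely generated over its prime field, whereas here $k$ is an arbitrary field (e.g.\ $k=\overline{\F}_p$, or any large field), so $K$ need not be finitely generated over $\F_p$. Indeed the naive Tate conjecture fails over algebraically closed base fields, and the correct statement of Tate-type results over general function fields is formulated precisely in terms of the $K/k$-trace --- i.e.\ the very object the paper's proof is built on. Two further steps are also glossed: ``propagating pointwise isogenies to a global isogeny of families'' is nontrivial (one must spread a generic isogeny out over $U$, or argue that $\underline{\Hom}(\mathcal{A},A_0\times U)$ is unramified over $U$), and the final descent of $A_0$ and the isogeny from $k^s$ to $k$ is not automatic: $H^1(\Gal(k^s/k),\Aut(A_0))$ need not vanish, and ``coboundary up to isogeny'' is not a standard vanishing statement. (Note the theorem as stated only requires $A_0/k^s$ in its application, but as literally stated it asks for $A_0/k$ and an isogeny over $K$, which the trace construction provides for free.) Your characteristic-zero route via the theorem of the fixed part and Torelli is essentially sound, though much heavier than Lang--N\'eron.
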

\begin{proof}
The proof relies on the theory of the $K/k$-trace of $A$, which is the largest isogeny factor of $A$ that descends to $k$.
Precisely, we define the $K/k$-trace of $A$ to be a pair $(A_0/k, t)$ with $A_0$ an abelian variety and $t: A_0 \otimes_k K \to A$ a $K$-homomorphism with finite kernel which is final for all such pairs.
Thus for the theorem it suffices to show that $t$ is an isogeny.

The conditions on $K/k$ ensure we are in the situation of the Lang-N\'eron theorem, an analogue of the Mordell-Weil theorem.
Precisely, we have that $A(k^sK)/t(A_0(k^s))$ is a finitely generated abelian group.
See \cite{conrad06} for full details.
Then we have that $t(A_0(k^s))[\ell^n]$ is finite index in $A(k^sK)[\ell^n]$, of index bounded independent of $n$.
Further, since $\rho(\Gal(K^s/k^sK)) = 1$ we have that $A(k^sK)[\ell^n] = A(K^s)[\ell^n]$.
Taking the direct limit over $n$ we see $A(K^s)[\ell^\infty]$ and $A_0(k^s)[\ell^\infty]$ have the same rank as $\Q_\ell / \Z_\ell$-modules.
Hence $A$ and $A_0$ have the same dimension and thus $t$ is an isogeny, as required.
\end{proof}
\begin{rmk}\label{rmk:groth}
In general the above theorem only returns an isogeny $A_0 \otimes K \to A$.
However, in the case of elliptic curves one can always find an isomorphism $E_0 \otimes K \xrightarrow{\sim} E$.
Further, as explained in \cite{oort73} 1.3, the isogeny is an isomorphism if $\ch K = 0$ or if $A$ has $p$-rank $\geq g-1$.
In particular this is satisfied if $A$ is ordinary.
\end{rmk}

\subsection{Motives and Galois descent}

We are interested in exploring how theorem \ref{thm:groth} can be generalised.
To extend it to general smooth projective varieties $X$, the replacement for the $\ell$-adic Tate module is the $\ell$-adic \'etale cohomology $H^i(\overline X, \Q_\ell)$.
However, even for a fixed $X$ there are distinct Galois representations for each $i$, and even for a fixed $i$ we have many interesting subrepresentations of $H^i(\overline X, \Q_\ell)$.
The natural category that captures these structures is the category of \emph{motives}.
This is a $\Q$-linear category through which all cohomology theories factor.
If we let $\Var_K$ be the category of smooth, projective varieties over $K$ and denote $\Mot_K$ a category of motives, then it fits in the following diagram, where one may also include $H^\ast_B$ and $H^\ast_\mathrm{cris}$, depending on $\ch K$.
\[
\xymatrix{
&&&& \Vect_{\Q_\ell} \\
\Var_K \ar[rr] \ar[rrrru]^{H^i_{\et}(\cdot,\Q_\ell)} \ar[rrrrd]_{H^i_{\text{dR}}(\cdot)} && \Mot_K \ar[rru] \ar[rrd] && \\
&&&& \Vect_K
}
\]
For concreteness we will describe below Grothendieck's construction with respect to an equivalence relation on cycles, $\sim$.
The key examples would be rational, algebraic, numerical, and homological equivalence.
See \cite{milne13} for full details.

For $\sim$ an equivalence relation on cycles, we have that category of motives, $\Mot_{K,\sim}$, is defined by
\begin{center}
\begin{tabular}{rl}
\textbf{Objects:} & $\h(X,e,m)$ \\
\textbf{Morphisms:} & $\Hom(\h(X,e,m),\h(Y,f,n)) = f \circ C^{\dim X + n - m}_\sim(X \times Y)_\Q \circ e$
\end{tabular}
\end{center}
where:
\begin{itemize}
\item $X, Y$ are smooth, projective varieties.
\item $m, n$ are integers.
\item $C^{\dim X + n - m}_\sim(X \times Y)_\Q$ is the group of codimension $\dim X + n - m$ cycles on $X \times Y$ up to the equivalence relation $\sim$, tensored with $\Q$ over $\Z$.
\item $e, f$ are idempotents in $C^{\dim X}_\sim(X \times X)_\Q, C^{\dim Y}_\sim(Y \times Y)_\Q$, respectively.
\end{itemize}
One example of particular relevance for our purposes will be $\Mot_{K,\ell}$ of motives up to $\ell$-adic homological equivalence.

The idea is to use such cycles as homomorphisms to capture all maps on cohomology induced by correspondences.
The role of the idempotents is to single out parts of cohomology beyond the full $H^\ast(X)$.
The addition of the integer $m$ allows for duals to exist.

Given such a motive, it admits \emph{realisations} in the various cohomology theories, by $\h(X,e,m) \mapsto e^\ast H^\ast(X) \otimes H^2(\PP^1)^{\otimes m}$.
\begin{ex}
If $\Delta_X \subseteq X \times X$ is the diagonal, then we define $\h(X) = \h(X, \Delta_X, 0)$.
Under any realisation this gives the full cohomology group $H^\ast(X)$.

If $X$ is defined over a finite field $\F_q$ and $\ell \nmid q$, let $g_i$ be the characteristic polynomial of Frobenius on $H^i(\overline X, \Q_\ell)$.
Find a polynomial $G$ that is $1 \pmod{g_i}$ and $0 \pmod{g_j}$ for $j \neq i$, this is possible due to the Riemann Hypothesis.
Let $\Gamma_F$ denote the graph of Frobenius on $X \times X$.
Then $\h(X, G(\Gamma_F), 0)$ under any realisation gives the cohomology group $H^i(X)$.
\end{ex}
Over a general field $K$, the existence of a cycle which cuts out precisely $H^i(X)$ is an open conjecture, known as the K{\''u}nneth type Standard Conjecture.
We call such a cycle the $i$th K{\''u}nneth projector, denoted $\pi^i_X$ and write $\h^i(X) = (X,\pi^i_X,0)$.
\begin{ex}
We have a decomposition $\h(\PP^1) = \h^0(\PP^1) \oplus \h^2(\PP^1)$.
We write $\L = \h^2(\PP^1)$, this is known as the \emph{Lefschetz motive}.
\end{ex}

We now are in position to discuss Galois descent.
For an abelian variety $A$, we have that $V_\ell A \cong H^1_\et(\overline A, \Q_\ell)^\vee$.
Hence, for a general motive, we are lead to the following.
\begin{dfn}
Let $K/k$ be a regular field extension and $H$ be a motive in $\Mot_{K,\ell}$.
Let $H_\ell$ be the $\ell$-adic realisation, equipped with Galois representation $\rho: \Gal_K \to \Aut(H_\ell)$.
We say $H/K$ satisfies \emph{$K/k$-Galois-$\ell$-descent} if $\rho(\Gal(K^s/k^sK))$ is trivial.
\end{dfn}
Given this, we can now state the main conjecture, i.e. that this descent criterion is \emph{effective}.
\begin{conj}\label{conj:motive}
Let $H$ be a motive in $\Mot_{K,\ell}$ satisfying $K/k$-Galois-$\ell$-descent.
Then there exists a motive $H_0$ be a motive in $\Mot_{k^s,\ell}$ such that there exists an isomorphism $H_0 \otimes_{k^s} K^s \to H \otimes_K K^s$.
\end{conj}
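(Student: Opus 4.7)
My strategy is to reduce to Theorem \ref{thm:groth} by realising the motive $H$ as a direct summand of the motive of an abelian variety. The point is that on the subcategory of abelian motives the conjecture is essentially already known: if $H$ is cut out by an idempotent from some $\h^1(A)^{\otimes a} \otimes (\h^1(A)^\vee)^{\otimes b}$, then the $K/k$-Galois-$\ell$-descent of $H$ propagates to an analogous descent on $V_\ell(A)$, possibly after passing to a suitable isogeny factor of $A$. Grothendieck's theorem then supplies an abelian variety $A_0/k^s$ with an isogeny to $A$, and one recovers $H_0$ by transporting the idempotent along this isogeny and retaining the descended piece.

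Hence the crux is to show that every motive $H$ in $\Mot_{K,\ell}$ satisfying the descent hypothesis is of \emph{abelian type}, i.e.\ is a summand of an abelian motive. The rough plan is: pick a smooth projective $X/K$ with $H \subseteq \h(X,e,m)$; construct an auxiliary abelian variety $A/K$ together with an algebraic correspondence $\gamma \subseteq X \times A$ whose $\ell$-adic realisation identifies $H_\ell$ with a piece of a tensor construction on $H^1_\ell(A)$, equivariantly for $\Gal_K$; transfer the descent hypothesis from $H$ across $\gamma$ to $A$; invoke Theorem \ref{thm:groth} to obtain $A_0/k^s$; and finally descend the cycle $\gamma$ together with the idempotent $e$ to produce $H_0$.

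The principal obstacle is the second step. Outside of a handful of geometries (abelian varieties themselves, K3 surfaces via Kuga-Satake, and scattered cases involving hyperk\"ahler manifolds), there is no known recipe to realise a motive as a summand of an abelian motive, and even in the known cases, positive characteristic requires highly nontrivial input from the theory of integral canonical models of Shimura varieties. This is presumably why the present paper restricts to K3 surfaces, where the Kuga-Satake construction---classically in characteristic $0$ and via Madapusi-Pera in odd characteristic---provides exactly the correspondence required. A secondary issue, flagged by Remark \ref{rmk:groth}, is that Theorem \ref{thm:groth} only gives descent up to isogeny in general; upgrading to an isomorphism of motives requires either characteristic $0$ or an ordinariness hypothesis, which matches the trichotomy in the statement of Theorem \ref{thm:main}, with the supersingular case seemingly demanding an entirely different approach through the Lefschetz motive.
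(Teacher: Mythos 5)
Your proposal correctly recognises that the statement is a \emph{conjecture}: the paper offers no general proof, only the special cases of abelian motives (Proposition \ref{prop:conjavs}) and K3 surfaces under hypotheses (Theorem \ref{thm:main}). Your strategy --- realise $H$ as a summand of an abelian motive, transfer the descent hypothesis to the Tate module, invoke Theorem \ref{thm:groth}, and carry the idempotent back --- is precisely the paper's route in the cases it does handle, with the Kuga-Satake construction supplying the required correspondence for K3 surfaces; and you correctly identify both the genuine obstruction (no general recipe for abelian-type realisations) and the isogeny-versus-isomorphism issue of Remark \ref{rmk:groth} that forces the trichotomy in Theorem \ref{thm:main}. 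So the proposal is an accurate diagnosis and matches the paper's approach, but, as you acknowledge, it is a programme rather than a proof of the conjecture.
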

\begin{rmk}
Since the category of motives is $\Q$-linear and the morphisms are induced by correspondences, this can be viewed as analogous to saying that this criterion for a variety $Y/K$ predicts a variety $Y_0/k^s$ and an algebraic correspondence $Y_0 \otimes_{k^s} K^s \to Y \otimes_K K^s$ that induces an isomorphism on cohomology.
\end{rmk}
\begin{prop}\label{prop:conjavs}
Conjecture \ref{conj:motive} holds for the sub-tensor-category of $\Mot_{K,\ell}$ generated by $\h^1(A)$ for all abelian varieties $A$.
\end{prop}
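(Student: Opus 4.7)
The strategy is to reduce to Grothendieck's theorem (Theorem \ref{thm:groth}) via the $K/k$-trace construction, and then descend the resulting idempotent to the base field. Since $H$ lies in the sub-tensor-category generated by $\h^1(A)$ for various abelian varieties $A/K$, I begin by replacing the generators by a single product $B/K$ and realizing $H$ as a direct summand $H = (B^N, e, m)$ of $\h^w(B^N)(m)$ in $\Mot_{K,\ell}$ for some positive integers $N, w, m$.

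Next, I apply the $K/k$-trace to $B$ to obtain an abelian variety $B_0/k$ and a universal homomorphism $t \colon B_0 \otimes_k K \to B$ with finite kernel. As in the proof of Theorem \ref{thm:groth}, Lang-N\'eron identifies $V_\ell B_0$ (via $t$) with $(V_\ell B)^{\Gal(K^s/k^sK)}$ inside $V_\ell B$. By hypothesis, $H_\ell$ is fixed by $\Gal(K^s/k^sK)$, and so $H_\ell$ lies in $H^w(B^N, \Q_\ell)(m)^{\Gal(K^s/k^sK)} = H^w((B_0 \otimes_k K)^N, \Q_\ell)(m)$ (using semi-simplicity of the Tate module representations, which is known for abelian varieties in sufficient generality). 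On the motivic side, $t$ induces a summand decomposition $\h(B^N) \cong \h((B_0 \otimes_k K)^N) \oplus C$ in $\Mot_{K,\ell}$, and since the $\ell$-adic realization of $H$ lies in the first summand, $H$ itself is a direct summand of $\h^w((B_0 \otimes_k K)^N)(m)$.

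It then remains to descend the idempotent $\tilde{e}$ cutting out $H$ from $\h^w((B_0 \otimes_k K)^N)(m)$ to one defined over $k^s$ (indeed, over $k$). Because $B_0$ is defined over $k$ and $K/k$ is regular, algebraic cycle classes on $(B_0)^{2N}$ modulo $\ell$-adic equivalence do not grow under base change to $K$, and hence $\tilde{e}$ arises from a $k$-rational idempotent $e_k$. Setting $H_0 := (B_0^N, e_k, m) \otimes_k k^s$ in $\Mot_{k^s, \ell}$ yields the required isomorphism $H_0 \otimes_{k^s} K^s \cong H \otimes_K K^s$ by unwinding the base changes.

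The main obstacle is the descent of the idempotent in the final step, which requires a stability result for algebraic cycle classes on powers of an abelian variety under a regular base change. For divisor classes this follows from standard N\'eron-Severi arguments; the higher codimensional case for abelian varieties can be handled via the Deninger-Murre Chow-K\"unneth decomposition, or by invoking the Tate conjecture for abelian varieties (known in sufficient generality) to pass through the $\ell$-adic realization.
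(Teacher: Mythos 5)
Your proposal is far more ambitious than the paper's own argument: the paper's proof is a two-line reduction that treats only the generators $\h^1(A)$, observing that the isogeny $A_0 \otimes_{k^s} K^s \to A \otimes_K K^s$ supplied by Theorem \ref{thm:groth} induces an isomorphism $\h^1(A_0) \otimes_{k^s} K^s \cong \h^1(A) \otimes_K K^s$ in $\Mot_{K^s,\ell}$. You instead try to handle an arbitrary object $H = (B^N, e, m)$ of the tensor category, and this is where a genuine gap appears. The step ``$H_\ell$ lies in $H^w(B^N,\Q_\ell)(m)^{\Gal(K^s/k^sK)} = H^w((B_0\otimes_k K)^N,\Q_\ell)(m)$'' is false: taking invariants does not commute with tensor, exterior, or dual constructions, so the invariants of $H^w(B^N,\Q_\ell)$ are in general strictly larger than the corresponding cohomology of the trace. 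Concretely, let $E/K$ be a non-isotrivial elliptic curve without CM and take $H = \h^2(E) = \wedge^2 \h^1(E)$. Then $H_\ell \cong \Q_\ell(-1)$, on which $\Gal(K^s/k^sK)$ acts trivially (the cyclotomic character dies on this subgroup since $k^s$ contains all roots of unity), so $H$ satisfies the descent hypothesis; but the $K/k$-trace of $E$ is zero, and your chain of containments would force $H_\ell = 0$. The proposition's conclusion still holds in this example because $\h^2(E) \cong \L$ descends for free, but not by your route. Any proof covering the whole tensor subcategory must account for invariant subspaces of tensor constructions built from non-invariant $H^1$'s, and producing the required correspondences there is essentially a Tate-conjecture-flavoured algebraicity problem, not something Lang--N\'eron gives you. (This also exposes that the paper's own proof silently restricts to the generators.)

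Two secondary points. First, your identification of $V_\ell(\mathrm{Tr}_{K/k}B)$ with $(V_\ell B)^{\Gal(K^s/k^sK)}$ is correct but needs semisimplicity of the geometric monodromy representation plus Lang--N\'eron; it is the right refinement of the argument in Theorem \ref{thm:groth} and should be cited as such. Second, the step you flag as the ``main obstacle''---descending the idempotent---is actually the more tractable one for $\ell$-adic homological equivalence: spread the cycle $\tilde e$ out over an open subscheme of a smooth model $S/k$ with function field $K$ and specialize at a closed point; compatibility of the cycle class map with specialization, together with the fact that $B_0^{2N}$ is constant over $S$, shows the class of $\tilde e$ is algebraic over a finite extension of $k$. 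You do not need the Tate conjecture for this, and invoking it as ``known in sufficient generality'' for abelian varieties is an overclaim---it is known for divisors and for $\mathrm{End}$-type classes, not for cycles of arbitrary codimension.
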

\begin{proof}
By theorem \ref{thm:groth}, if $\h^1(A)$ satisfies $K/k$-Galois-$\ell$-descent, then $A$ is isogenous to some $A_0/k^s$.
Indeed, such maps induce isomorphisms on cohomology.
Thus in particular $\h^1(A_0) \otimes_{k^s} K^s \cong \h^1(A) \otimes_K K^s$, as required.
\end{proof}

\section{K3 surfaces}\label{sec:k3}

\subsection{K3 surfaces}

Elliptic curves are the only smooth projective curves which admit a group law.
Hence it is natural to generalise theorems for elliptic curves to the case of abelian varieties.
Another feature uniquely specifying elliptic curves among smooth projective curves is that they have trivial canonical bundle.

Generalising this to higher dimensions give the notion of a \emph{Calabi-Yau} variety.
Examples of these are given by abelian varieties in each dimension, however these do not constitute all examples.
In the case of surfaces we may ask for a Calabi-Yau surface, but further ask that it is simply connected.
\begin{dfn}\label{def:k3}
A K3 surface $X/K$ is a smooth projective surface over $K$ such that $\omega_X \cong \OO_X$ and $H^1(X,\OO_X) = 0$.
\end{dfn}
Perhaps the most striking feature of a K3 surface is its cohomology.

We begin in characteristic 0, where this is captured by the Hodge diamond, given below.
\[
\begin{array}{ccccc}
&& 1 && \\
& 0 && 0 & \\
1 && 20 && 1 \\
& 0 && 0 & \\
&& 1 &&
\end{array}
\]
In particular, $H^2(X_\C,\Z)$ equipped with the intersection pairing has the structure of a quadratic lattice, isomorphic to the so-called K3 lattice $L = U^{\oplus 3} \oplus E_8^{\oplus 2}$, where $U$ is the hyperbolic lattice.
A \emph{polarisation} of $X$ is a class of ample line bundle $\xi$ on $X$ of degree $d$.
A polarisation determines a primitive cohomology subgroup $PH^2_\xi(X,\Z) = \langle \xi \rangle^\bot \subseteq H^2(X_\C,\Z)$.
We set $L_d = \langle e-df \rangle^\bot \subseteq L$ where $d$ is the degree of $\xi$ and $e,f$ are basis elements of one copy of $U$ satisfying $e^2 = f^2 = 0$ and $\langle e,f \rangle = 1$.
Then there further exists an isomorphism of quadratic lattices $PH^2_\xi(X,\Z) \cong L_d$.
These cohomologies are weight 2 Hodge structures, which viewed as representations of $\Ss$ have image in $\SO(L)$ and $\SO(L_d)$ respectively.

In arbitrary characteristic we instead consider $\ell$-adic cohomology.
Here, as a module with quadratic form, there exist isomorphisms $H^2(X,\Z_\ell) \cong L \otimes \Z_\ell$ and $PH^2_\xi(X,\Z_\ell) \cong L_d \otimes \Z_\ell$.

As motives, we have $\h(X) = \h^0(X) \oplus \h^2(X) \oplus \h^4(X) = \L^{\otimes 0} \oplus \h^2(X) \oplus \L^{\otimes 2}$.

In \cite{artinmazur}, Artin and Mazur introduce formal groups associated to the cohomology of algebraic varieties.
In particular, if $\ch(K) = p > 0$, associated to a K3 surface $X$ we have the \emph{formal Brauer group} $\widehat{\Br}_X$, which is a smooth one-dimensional formal group.
As with any such one-dimensional formal groups $\widehat{\Br}_X$ in positive characteristic, we have the \emph{height} invariant.
\begin{dfn}
Let $K$ be a field such that $\ch(K) = p > 0$.
Let $X/K$ be a K3 surface.
The \emph{height} of $X$ is the height of the formal Brauer group $\widehat{\Br}_X$ of $X$.
We say $X$ is \emph{ordinary} if it has height 1.
We say $X$ is \emph{supersingular} if it is of infinite height.
\end{dfn}
There are alternative characterisations of ordinary and supersingular K3 surfaces.
In the case of a finite field, the category of ordinary K3 surfaces has been given a classification in terms of certain linear algebraic data in \cite{taelman20}.

\subsection{The supersingular case}\label{sec:super}

An equivalent characterisation of supersingular K3 surfaces is that they are precisely the surfaces in positive characteristic with maximal Picard rank $\rho(X) = 22$.
(Note that in characteristic zero the Picard rank is bounded by $h^{1,1} = 20$.)
One might therefore expect that the corresponding motive $h(X)$ would take a particularly simple form.
For instance, if one considers the $\ell$-adic realisation, we see that we have an isomorphism $NS(X) \otimes \Q_\ell \xrightarrow{\sim} H^2_\et(X,\Q_\ell(1))$.

We now turn to the results of \cite{braggyang21}.
In particular, Theorem 6.11 gives that for $p$ odd we have an isomorphism between the motives $\h^2(X)$ and $\h^2(Y)$ for any two supersingular K3 surfaces $X,\ Y$.
Following Remark 6.12, we start with arbitrary supersingular $X$ and apply this when $Y$ is a supersingular Kummer surface.
Then \cite{shioda77}[Theorem 1.1] gives that supersingular Kummer surfaces $Y$ are unirational, from which one finds that $\h^2(Y) = \L^{\oplus 22}$.

\begin{proof}[Case 3 of Theorem \ref{thm:main}]
Let $K$ be a field of characteristic $p > 0$ and $X/K$ be a supersingular K3 surface.
(We will not need to make any Galois descent assumption.)
As motives, we have $\h(X) = \L^{\otimes 0} \oplus \L^{\oplus 22} \oplus \L^{\otimes 2}$.
However, the Lefschetz motive is $\L = \h^2(\PP^1)$, which admits a model over $\Z$.
Hence the same is true for the motive $\h(X)$ and it can be descended to any field $k \subseteq K$.
\end{proof}

\subsection{The Clifford algebra and $\GSpin$}

Let $L$ be a quadratic lattice with quadratic form $q$ over a commutative ring $R$ in which $2$ is not a zero divisor.
We have the Clifford algebra
\[
\Cl(L) = (\oplus_{n\geq0} L^{\otimes n}) / (v \otimes v - q(v)),
\]
which comes with a natural $\Z/2\Z$-grading.
We define the group $\GSpin(L)$ as
\[
\GSpin(L) = \{ v \in \Cl(L)^\times ~|~ vLv^{-1} = L \}.
\]
(Warning: In other references this is often referred to as $\mathrm{CSpin}(L)$.)
We will use the same notation $\GSpin(L)$ for an algebraic group over $R$ such that $\GSpin(L)(R') = \GSpin(L \otimes R')$.
We get an orthogonal representation of $\GSpin(L)$ on $L$ via conjugation, on which one can see the scalars act trivially.
This gives the following short exact sequence, expressing $\GSpin(L)$ as a central extension of $\SO(L)$.
\[
1 \to \G_m \to \GSpin(L) \xrightarrow{\ad} \SO(L) \to 1
\]
There is an anti-involution $c \mapsto c^\ast$ on $\Cl(L)$ induced by reversing the order of simple tensors.
We have the \emph{spinor norm}
\begin{align*}
\nu : \GSpin(L) &\to \G_m \\
x &\mapsto x^\ast x.
\end{align*}
Note that restricting to $\ker \ad \cong \G_m$ gives $\nu(x) = x^2$.

Further, on $\Cl(L)$ we have an $R$-linear reduced trace map $\Trd: \Cl(L) \to R$ such that $(x,y) \mapsto \Trd(xy)$ is a non-degenerate symmetric bilnear form on $\Cl(L)$.
\begin{lem}\label{lem:spinsymp}
For $\delta \in \Cl(L)^\times$ such that $\delta^\ast = -\delta$, the form $\psi_\delta(x,y) = \Trd(x\delta y^\ast)$ defines an $R$-valued symplectic form on $\Cl(L)$.
For $g \in \GSpin(L)$ we have
\begin{align*}
\psi_\delta(gx,gy) = \nu(g) \psi_\delta(x,y).
\end{align*}
\end{lem}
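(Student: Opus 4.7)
The plan is to verify the three defining properties of a symplectic form---bilinearity, the alternating property, and non-degeneracy---directly, and then derive the transformation law from the spinor norm identity $g^\ast g = \nu(g)$ together with the cyclic property of the reduced trace.

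Bilinearity is immediate from the $R$-linearity of $\Trd$, of the involution $y \mapsto y^\ast$, and of multiplication in $\Cl(L)$. For the alternating property, I would apply $(ab)^\ast = b^\ast a^\ast$ together with $\delta^\ast = -\delta$ to compute $(x\delta x^\ast)^\ast = -x\delta x^\ast$. Combining this with the standard identity $\Trd(z^\ast) = \Trd(z)$ (invariance of the reduced trace under the canonical involution of $\Cl(L)$, which after base-change to a splitting algebra reduces to invariance of the matrix trace under an anti-involution) yields $\Trd(x\delta x^\ast) = -\Trd(x\delta x^\ast)$, and hence $\Trd(x\delta x^\ast) = 0$ since $2$ is not a zero divisor in $R$. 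For non-degeneracy, suppose $\psi_\delta(x,y) = 0$ for all $y$; since $\delta \in \Cl(L)^\times$ and $\ast$ is an $R$-linear bijection, the map $y \mapsto \delta y^\ast$ is a bijection of $\Cl(L)$, so $\Trd(xz) = 0$ for all $z$, whence $x = 0$ by the non-degeneracy of $(x,y) \mapsto \Trd(xy)$ stated in the setup.

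For the transformation law, I would use the cyclic property $\Trd(ab) = \Trd(ba)$ to compute
\begin{align*}
\psi_\delta(gx, gy) = \Trd\bigl(gx\,\delta\,(gy)^\ast\bigr) = \Trd(gx\,\delta\,y^\ast g^\ast) = \Trd\bigl(g^\ast g \cdot x \delta y^\ast\bigr).
\end{align*}
For $g \in \GSpin(L)$ the spinor norm identity $g^\ast g = \nu(g) \in \G_m$ places $g^\ast g$ in the center as a scalar, so it pulls out of the trace to give $\psi_\delta(gx, gy) = \nu(g)\,\psi_\delta(x,y)$.

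The only substantive inputs beyond routine bookkeeping are the two classical facts about $\Cl(L)$: the reduced trace is invariant under $\ast$, and the spinor norm $g^\ast g$ is a central scalar for $g \in \GSpin(L)$. Neither is likely to present a real obstacle, so I expect the proof to be essentially a direct manipulation with the trace.
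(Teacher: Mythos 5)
Your proof is correct and follows essentially the same route as the paper: the alternating property via $\Trd(z^\ast)=\Trd(z)$ and $\delta^\ast=-\delta$, and the transformation law via $g^\ast g=\nu(g)$ (the paper writes this as $g^\ast=\nu(g)g^{-1}$ and invokes conjugation-invariance of $\Trd$, which is equivalent to your use of cyclicity). Your explicit checks of bilinearity and non-degeneracy are a minor completeness bonus that the paper omits.
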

\begin{proof}
To see that $\psi_\delta$ is alternating and hence symplectic we compute
\[
\Trd(x\delta x^\ast) = \Trd((x\delta x^\ast)^\ast) = \Trd(x \delta^\ast x^\ast) = - \Trd(x\delta x^\ast),
\]
noting that $\Trd(x) = \Trd(x^\ast)$.

For $g \in \GSpin(L)$, we compute
\begin{align*}
\psi_\delta(gx,gy) = \Trd((gx)\delta(gy)^\ast) &= \Trd(g(x\delta y^\ast)g^\ast) \\
&= \Trd(g(x\delta y^\ast)\nu(g)g^{-1}) = \nu(g) \Trd(x\delta y^\ast) = \nu(g)\psi_\delta(x,y),
\end{align*}
noting that $g^\ast = \nu(g)g^{-1}$ and $\Trd$ is conjugation-invariant.
\end{proof}
\begin{rmk}
\begin{itemize}\label{rmk:GSpinGSp}
\item The above gives a map $\GSpin(L) \hookrightarrow \GSp(\Cl(L),\psi_\delta)$ under which the spinor norm corresponds to the similitude character.
\item There is an isomorphism $\Cl(L) \cong \End_{\Cl(L)}(\Cl(L))$ of $\GSpin(L)$-representations, where it acts on the right by right multiplication and on the left by conjugation.
The inclusion $L \subseteq \Cl(L)$ is $\GSpin(L)$-equivariant only with respect to the conjugation action.
\end{itemize}
\end{rmk}

\subsection{Hodge structures and Kuga-Satake}

Now consider $L$ a K3 Hodge structure.
That is, a Hodge structure $L$ of type $\{(2,0), (1,1), (0,2)\}$ such that $\dim L^{2,0} = \dim L^{0,2} = 1$.
\begin{ex}
If $X$ is a complex K3 surface or complex abelian surface then $H^2(X,\Z)$ is a K3 Hodge structure.
For any sub-Hodge structure $V \subseteq L^{1,1}$ then $V^\bot$ is a K3 Hodge structure, e.g. $PH^2(X,\Z)$, or the transcendental lattice $T(X)$.
\end{ex}
Under these circumstances, $\Cl(L_\R)$ can be equipped with a complex structure, or equivalently a weight 1 Hodge structure, i.e. of type $\{(1,0), (0,1)\}$.
If we denote the Hodge structure on $L$ as $\rho : \Ss \to \SO(L_\R)$, where $\Ss$ is the Deligne torus, then this weight 1 structure can be viewed as exhibiting a lift as in the following diagram.
\[
\xymatrix{
& \GSpin(L_\R) \ar[r]^{\sp} \ar[d]^\ad & \GSp(\Cl(L_\R)) \\
\Ss \ar_-{\rho(1)}[r] \ar^-{\tilde\rho}[ur] & \SO(L_\R(1))
}
\]
where we take the unique lift such that $\Nm \circ \tilde\rho$ is the norm map $\Ss \to \G_{m,\R}$.
The composition $\sp \circ \tilde\rho$ endows $\Cl(L)$ with a weight 1 Hodge structure (see \cite{deligne72}) and allows us to form a complex torus $\KS(L) = \Cl(L_\R)/\Cl(L)$.

If further the Hodge structure $L$ comes with a polarisation, it is possible to define a polarisation on $KS(L)$.
\begin{dfn}
Given a complex K3 surface $X$ the above construction applied to $L = PH^2(X,\Z)$ gives the \emph{Kuga-Satake abelian variety} $A = \KS(X)$.
\end{dfn}

\section{Shimura Varieties}\label{sec:shimura}

We can descend the above construction to algebraic K3 surfaces $X$ defined over other fields $k$ of characteristic 0, by simply applying it to $X_\C$.
The result of this would be some complex abelian variety $\KS(X_\C)$.
Work of Deligne in \cite{deligne72} shows that in fact there exists an abelian variety $A/\bar k$ such that $A_\C = \KS(X_\C)$.

Madapusi-Pera in \cite{pera15} extends this construction to fields of arbitrary characteristic.
Similar constructions can be found in \cite{rizov10} and \cite{maulik14}.
An excellent summary is available in \cite{yang20}.
This is done by packaging the above as certain morphisms of moduli spaces and Shimura varieties and extending these to the canonical integral models.
The relevant diagram here is as follows
\[
\xymatrix{
& \Sh(\GSpin(L_d)) \ar[r] \ar[d] & \Sh(\GSp(\Cl(L_d),\psi_\delta)) \\
\tilde M_{2d,\gamma} \ar[r] & \Sh(\SO(L_d))
}
\]
where:
\begin{itemize}
\item $\tilde M$ is a moduli space of ``$\gamma$-oriented'' K3 surfaces (quasi-polarised of degree $2d$).
\item $\Sh(\cdot)$ is the integral model of a Shimura variety $\SH(\cdot)$.
\item All morphisms are finite \'etale over their image.
\end{itemize}
The goal of this section is to how this diagram is constructed, and how it allows us to associate a Kuga-Satake abelian variety to a K3 surface.

We fix $p \in \Z$ a prime.

\subsection{Shimura varieties over number fields}

Recall $L = U^{\oplus 3} \oplus E_8^{\oplus 2}$ is the K3 lattice, equipped with its quadratic form.
For $d \in \Z$ we have $L_d = \langle e-df \rangle^\bot \subseteq L$, where $U = \langle e,f \rangle$.
Let $\Omega$ be the space of oriented negative definite planes in $L_d \otimes \R$.
\begin{lem}[\cite{pera16}, 3.1]
The pairs $(\GSpin(L_d \otimes \Q),\Omega)$ and $(\SO(L_d \otimes \Q), \Omega)$ are Shimura data with reflex field $\Q$.
\end{lem}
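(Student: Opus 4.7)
The plan is to verify the three Shimura axioms (SV1)--(SV3) for each pair and then compute the reflex field, handling the orthogonal case first and deducing the $\GSpin$ case from it. Given an oriented negative-definite plane $P \in \Omega$, I would define $h_P : \Ss \to \SO(L_d)_\R$ by letting $h_P(z)$ act trivially on $P^\perp$ (which is positive definite) and by rotation by $\arg(z/\bar z)$ on $P$ using the chosen orientation. This realises $L_d \otimes \R$ as a Hodge structure of type $\{(-1,1),(0,0),(1,-1)\}$ with $P_\C = L^{-1,1} \oplus L^{1,-1}$ (the orientation determining which isotropic line is which) and $L^{0,0} = P^\perp_\C$. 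Conversely every such Hodge structure on $L_d$ comes from a unique oriented negative-definite plane, so $P \mapsto h_P$ identifies $\Omega$ with a single $\SO(L_d \otimes \R)$-conjugacy class inside $\Hom(\Ss, \SO(L_d)_\R)$.

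Next I would verify the axioms. (SV1) follows because the induced Hodge structure on $\mathfrak{so}(L_d \otimes \R) \cong \wedge^2(L_d \otimes \R)$ is readily seen to be of type $\{(-1,1),(0,0),(1,-1)\}$ from the weights of $h_P$ acting on $L_d \otimes \C$. For (SV2), $h_P(i)$ acts as $-1$ on $P$ and $+1$ on $P^\perp$, so its conjugation preserves the orthogonal splitting and is a Cartan involution of the adjoint group, because the form is definite of opposite signs on each summand. For (SV3), $\SO(L_d)^{\ad}$ is $\Q$-simple (as $L_d$ is a non-degenerate quadratic lattice over $\Q$ of signature $(2,19)$) and $h_P$ is nontrivial on it, so there is no $\Q$-simple factor on which $h_P$ becomes trivial.

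For the reflex field, the cocharacter $\mu_{h_P} : \G_{m,\C} \to \SO(L_d)_\C$ acts on $L_d \otimes \C$ with eigenvalues $(z, 1, \dots, 1, z^{-1})$. Its $\SO(L_d)_{\overline\Q}$-conjugacy class is determined purely by this numerical weight pattern, hence is stable under $\Gal(\overline\Q/\Q)$, giving reflex field $\Q$. For the $\GSpin$ pair, I would use the lift $\tilde h_P : \Ss \to \GSpin(L_d)_\R$ introduced in the previous subsection, uniquely determined by $\ad \circ \tilde h_P = h_P$ together with the normalisation $\Nm \circ \tilde h_P$ equal to the norm $\Ss \to \G_{m,\R}$. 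Since $\GSpin(L_d) \to \SO(L_d)$ is a central $\G_m$-extension, conjugation of $\tilde h_P$ by $\GSpin(L_d \otimes \R)$ factors through $\SO(L_d \otimes \R)$, so $\Omega$ is again identified with a single $\GSpin(L_d \otimes \R)$-conjugacy class. Axioms (SV1) and (SV2) transfer at once via the isomorphism $\GSpin^{\ad} \cong \SO^{\ad}$, and (SV3) is unchanged. The cocharacter $\mu_{\tilde h_P}$ projects to $\mu_{h_P}$ and its central component is given by a power of the norm defined over $\Q$, so the reflex field is again $\Q$.

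The main obstacle is the careful bookkeeping around the $\GSpin$ lift: one must verify that the normalisation via $\Nm$ does define a morphism of $\R$-algebraic groups (not merely of real Lie groups), that the resulting collection of $\tilde h_P$ is still a single $\GSpin$-conjugacy class rather than requiring a refinement of $\Omega$, and that the cocharacter conjugacy class remains defined over $\Q$ once the central torus is included. All three points reduce to the fact that the central $\G_m \subset \GSpin(L_d)$ and the norm character are themselves defined over $\Q$.
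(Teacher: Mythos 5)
The paper does not prove this lemma --- it is quoted directly from Madapusi Pera (\cite{pera16}, 3.1) with no argument given --- so there is nothing to compare against except the cited source, and your verification is essentially the standard one found there: it is correct. Two small points are worth making explicit if you write this up: for (SV1) the types $(\pm 2,\mp 2)$ in $\wedge^2(L_d\otimes\C)$ vanish precisely because $L^{-1,1}$ and $L^{1,-1}$ are lines, which is the real content of the axiom here; and the claim that the lifts $\tilde h_P$ form a single $\GSpin(L_d\otimes\R)$-conjugacy class uses both that the spinor norm is conjugation-invariant (so conjugation carries normalised lifts to normalised lifts) and that $\GSpin(L_d)(\R)\to\SO(L_d)(\R)$ is surjective, which follows from Hilbert 90 applied to the central $\G_m$ (this would fail for $\mathrm{Spin}\to\SO$).
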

We have the Clifford algebra $\Cl(L_d)$.
As in Lemma \ref{lem:spinsymp}, for each $\delta \in \Cl(L_d)^\times$ there exists a symplectic form $\psi_\delta$ on $\Cl(L_d)$ and hence an embedding $\GSpin(L_d) \hookrightarrow \GSp(\Cl(L_d), \psi_\delta)$.
Let $\HH_\delta$ be the union Siegel half-spaces attached to $(\Cl(L_d), \psi_\delta)$.
Then for each $\delta$ we get the classical Siegel Shimura datum $(\GSp(\Cl(L_d \otimes \Q), \psi_\delta), \HH_\delta)$ with reflex field $\Q$.
\begin{lem}[\cite{pera16}, Lemma 3.6]
There exists a choice of $\delta$ such that the embedding $\GSpin(L_d) \hookrightarrow \GSp(\Cl(L_d), \psi_\delta)$ induces an embedding of Shimura data $(\GSpin(L_d \otimes \Q),\Omega) \hookrightarrow (\GSp(\Cl(L_d \otimes \Q), \psi_\delta), \HH_\delta)$.
\end{lem}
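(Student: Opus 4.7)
The plan is to exhibit an explicit $\delta \in \Cl(L_d)^\times$ with $\delta^\ast = -\delta$, using a rational negative definite $2$-plane inside $L_d$, and then verify that the associated symplectic form $\psi_\delta$ polarises the weight 1 Hodge structure induced on $\Cl(L_d \otimes \R)$ at a base point of $\Omega$. Granted this, Lemma \ref{lem:spinsymp} supplies the group-level embedding, and the polarisation check, together with the transitivity of the relevant group action on $\Omega$, will upgrade it to an embedding of Shimura data.

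For the construction: since the quadratic form on $L_d \otimes \Q$ has signature $(2,19)$, we can pick orthogonal $f_1, f_2 \in L_d$ spanning a rational negative definite $2$-plane $V_0 \subseteq L_d \otimes \R$, and set $\delta := f_1 f_2 \in \Cl(L_d)$. Applying the Clifford relation to $f_1 + f_2$ together with orthogonality yields $f_1 f_2 + f_2 f_1 = 0$, hence $\delta^\ast = f_2 f_1 = -\delta$; moreover $\delta^2 = -q(f_1) q(f_2) < 0$ since $q(f_i) < 0$, so $\delta$ is a unit. The candidate embedding is then the one produced by Lemma \ref{lem:spinsymp} for this choice of $\delta$.

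Next, fix the base point $h_0 \in \Omega$ given by $V_0$ with the preferred orientation coming from the ordering $(f_1, f_2)$. Unwinding the definition of the lift $\tilde\rho$ (together with the normalisation by $\Nm$), the operator $\sp \circ \tilde\rho_{h_0}(i)$ on $\Cl(L_d \otimes \R)$ is, up to a positive real scalar, left multiplication by $\delta$. Using $\delta^\ast = -\delta$ and the cyclic invariance of $\Trd$, the pairing $\psi_\delta\bigl(v, \sp \circ \tilde\rho_{h_0}(i)\, v\bigr) = \Trd(v \delta (\delta v)^\ast)$ reduces to a positive multiple of the canonical trace form $(x, y) \mapsto \Trd(x y^\ast)$ on $\Cl(L_d \otimes \R)$, which is positive definite by the standard Clifford-algebra fact that twisting the reduced trace form by a product of orthogonal vectors on which $q$ is negative produces a positive definite pairing. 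Hence $\psi_\delta$ polarises the induced weight 1 Hodge structure at $h_0$.

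Finally, the polarisation property propagates from $h_0$ to all of $\Omega$ by homogeneity: for $g \in \GSpin(L_d \otimes \R)$ one has $\psi_\delta(gv, gw) = \nu(g) \psi_\delta(v, w)$, and $\GSpin(L_d \otimes \R)$ acts transitively on $\Omega$ through its identity component, on which $\nu$ takes positive values, so positivity is preserved. The main obstacle is the careful bookkeeping of signs and orientations in the polarisation computation: one must align the chosen orientation of $V_0$ with the orientation datum built into $\Omega$, track the sign of $\delta^2$ through the normalisation $\delta/\sqrt{-\delta^2}$ implicit in $\tilde\rho$, and confirm that $\psi_\delta(v, \sp\circ\tilde\rho_{h_0}(i) v)$ emerges with the correct sign — the wrong orientation would yield a form that \emph{anti}-polarises, and the whole argument would have to be rerun with $-\delta$.
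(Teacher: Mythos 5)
The paper does not prove this lemma; it is quoted verbatim from Madapusi Pera (\cite{pera16}, Lemma 3.6), whose argument goes back to Deligne's construction of the polarisation on the Kuga--Satake Hodge structure. Your proposal reconstructs exactly that standard argument: take $\delta = f_1 f_2$ for an orthogonal pair spanning a rational definite plane, get the group-level embedding from Lemma \ref{lem:spinsymp}, and check that $\pm\psi_\delta$ polarises the weight~1 Hodge structure at a base point of $\Omega$. The skeleton, the choice of $\delta$, and the identification of the sign/orientation bookkeeping as the crux are all correct.

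There is, however, a genuine slip at the one non-formal step. Writing $J$ for the normalised left multiplication by $\delta$, the computation gives
\[
\psi_\delta(v, Jv) \;=\; \tfrac{1}{\sqrt{-\delta^2}}\,\Trd(v\,\delta\, v^\ast \delta^\ast) \;=\; -\tfrac{1}{\sqrt{-\delta^2}}\,\Trd\bigl((\delta v \delta)\, v^\ast\bigr),
\]
which is the trace form \emph{twisted by conjugation-type multiplication by $\delta$ on both sides}; it does not untwist to a positive multiple of the canonical form $(x,y)\mapsto\Trd(xy^\ast)$. Indeed the canonical form is visibly indefinite on $\Cl(L_d\otimes\R)$ when $L_d$ is indefinite: for $e\in L_d$ with $q(e)<0$ one has $\Trd(ee^\ast)=q(e)\cdot\dim\Cl(L_d)<0$. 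So your sentence ``reduces to a positive multiple of the canonical trace form \dots which is positive definite'' is false as stated, and the subsequent appeal to a ``standard fact about twisting'' is where the actual content lives: one must expand $v$ in the monomial basis $e_I$ attached to an orthogonal basis of $L_{d,\R}$ extending $f_1,f_2$ and check that $-\Trd(\delta e_I \delta e_I^\ast)$ has a single sign across all $I$ (this is Deligne's computation, and it is precisely where the signature hypothesis on the plane enters). Without that computation the proof has a hole exactly at its load-bearing step. A second, more minor, imprecision: the identity component of $\GSpin(L_{d,\R})$ does not act transitively on $\Omega$, which has two connected components; positivity propagates to the second component not by transitivity but because the Siegel domain $\HH_\delta$ is itself a union of two half-spaces, so it suffices that $\pm\psi_\delta$ (rather than $+\psi_\delta$) polarises at every point.
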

The adjoint representation $\GSpin(L_d) \to \SO(L_d)$ also induces a morphism of Shimura varieties, giving the diagram
\[
\xymatrix{
\SH(\GSpin(L_d)) \ar[r] \ar[d] & \SH(\GSp(\Cl(L_d),\psi_\delta)) \\
\SH(\SO(L_d))
}
\]

\subsection{Bundles and abelian varieties}\label{sec:bundles}

To describe how this relates to K3 surfaces an abelian varieties, we now describe certain bundles on these Shimura varieties.
These will be set up in such a way as to be amenable to integrality over $\Z_{(p)}$, discussed in the following section.

For any Shimura datum $(G,X)$ we have an isomorphism of complex manifolds
\[
\SH_K(G)(\C) \cong G(\Q) \setminus (X \times G(\A_f)) / K.
\]
This allows us to construct a functor taking an algebraic representation $V$ of $G$ to a bundle on $\SH_K(G)(\C)$.
First by taking the constant local system $V \times X \times G(\A_f) / K$, then quotienting by the action of $G(\Q)$ gives a local system $\V_B$ on $\SH_K(G)(\C)$.
We can further equip the bundle $(V \otimes \C) \times X$ with a filtration such that at each point $h \in X$ the filtration is induced by the homomorphism $h: \mathbb{S} \to G \otimes \R$ at that point.
This descends to $\SH_K(G)(\C)$, giving an \emph{algebraic variation of Hodge structures}, denoted $(\V_B, \Fil^\bullet \V_{\dR,\C})$ (see \cite{pera16}, 3.3).

Applying the above construction to the representation of $\GSpin(L_d)$ on $\Cl(L_d)$ by right multiplication gives an algebraic variation of Hodge structures $(\Hh_B, \Fil^\bullet \Hh_{\dR,\C})$ on $\SH_K(\GSpin(L_d))(\C)$.
This is further equipped with a $\Z/2\Z$-grading, a right $\Cl(L_d)$-action, and tensors
\begin{align*}
\PI_B &\in H^0\left(\SH_K(\GSpin(L_d))(\C), (\Hh_B^{\otimes 2} \otimes (\Hh_B^\vee)^{\otimes 2}) \otimes \Q\right), \\
\GA_B &\in H^0\left(\SH_K(\GSpin(L_d))(\C), \det(\LL_B)\right),
\end{align*}
where $(\LL_B, \Fil^\bullet \LL_{\dR,\C})$ is the variation of Hodge structures arising from the adjoint representation of $\GSpin(L_d)$ into $\SO(L_d)$, identified with $\PI_B((\Hh_B, \Fil^\bullet \Hh_{\dR,\C}))$.

The Shimura variety $\SH_K(\GSp(\Cl(L_d),\psi_\delta))$ admits a natural moduli description.
Write $K = K_p K^p$ for $K_p \subseteq \GSp(\Q_p)$ and $K^p \subseteq \GSp(\A_f^p)$.
We have a functor which takes a scheme $T$ to a polarised abelian variety up to prime-to-$p$ isogeny with level structure.
Precisely, let $A$ be a prime-to-$p$ isogeny class of abelian varieties and $\lambda: A \to A^\vee$ a $\Z_{(p)}^\times$-equivalence class of a polarisation.
Let $\underline{\mathrm{Isom}}\left(\underline{\Cl(L_d) \otimes \A_f^p}, R^1 f_{\et,\ast} \underline{\A}_f^p\right)$ be the isomorphisms compatible with $\psi_\delta$ and $\lambda$ up to an element of $(\A_f^p)^\times$.
Then the functor gives is a triple $(A, \lambda, \eps)$, where $A$ is a prime-to-$p$ isogeny class of abelian varieties, $\lambda: A \to A^\vee$ is the $\Z_{(p)}^\times$-equivalence class of a polarisation, and $\eps \in H^0\left(T, \underline{\mathrm{Isom}}\left(\underline{\Cl(L_d) \otimes \A_f^p}, R^1 f_{\et,\ast} \underline{\A}_f^p\right)/K^p\right)$ is a $K$-level structure, where $f: A \to T$ is the structure morphism.
This functor is representable over $\Z_{(p)}$ by a scheme $\Sh_K(\GSp(\Cl(L_d)),\psi_\delta)$ whose generic fibre is precisely $\SH_K(\GSp(\Cl(L_d),\psi_\delta))$.

We thus have the tautological bundle $(\Aa, \lambda, \eps)$ over $\SH_K(\GSp(\Cl(L_d)),\psi_\delta)$.
Pulling this back to $\SH_K(\GSpin(L_d))$ gives a bundle $(\Aa^\KS_\Q, \lambda^\KS_\Q, \eps^\KS_\Q)$, where $\Aa^\KS_\Q$ is known as the \emph{Kuga-Satake abelian scheme}.
On $\SH_K(\GSpin(L_d))(\C)$ the algebraic variation of Hodge structures arising from the induced $\GSpin(L_d)$ representation on $H^1(\Aa^\KS_\C)$ is identified with $(\Hh_B, \Fil^\bullet \Hh_{\dR,\C})$.
Thus $\Aa^\KS_\C$ carries a $\Z/2\Z$-grading and $\Cl(L_d)$-action.
By \cite{pera16}, Proposition 3.11, the identification of bundles, along with the $\Z/2\Z$-grading and $\Cl(L_d)$-action, descend to $\Aa^\KS_\Q$ on $\SH_K(\GSpin(L_d))$ over $\Q$.

Conversely, we can construct bundles $\Hh_\bullet$ with tensors $\PI_\bullet, \GA_\bullet$ on $\Sh_K(\GSpin(L_d))$ for $\bullet = \ell, \dR, \mathrm{cris}$ using the cohomology of $\Aa^\KS$.
Similarly, we construct $\LL_\bullet$ on $\Sh_K(\SO(L_d))$ by $\PI_\bullet(\Hh_\bullet)$.

\subsection{Integral models}

By the moduli interpretation above, we have an integral model $\Sh_K(\GSp(\Cl(L_d)),\psi_\delta)$.
From here we will abuse notation and use $K$ to be a compatible choice of open compact subgroup of each of $\GSpin(L_d)(\A_f)$, $\SO(L_d)(\A_f)$ and $\GSp(\Cl(L_d),\psi_\delta)(\A_f)$ simultaneously.
We further choose $K$ sufficiently small so that $\SH(\GSp(\Cl(L_d),\psi_\delta))$ admits a description as a fine moduli scheme.
The definition of integral canonical model can be found in \cite{pera16}, Definition 4.3, relying on a certain extension property, analogous to the N\'eron model of an abelian variety.
We state the form of this definition found in \cite{kisin10} 2.3.7.
\begin{dfn}\label{dfn:ext}
A scheme $X$ over $\Z_{(p)}$ satisfies the extension property if, for any regular, formally smooth $\Z_{(p)}$-scheme $S$, any map $S \otimes \Q \to X$ extends to a map $S \to X$.
\end{dfn}
We now have the main result on integral models for the Shimura varieties in question.
\begin{thm}[\cite{pera16} Theorem 4.4, \cite{kisin10} 2.3.8, \cite{vasiu99} 3.4.14]\label{thm:integral}
Let $\Sh_K(\GSpin(L_d))$ be the Zariski closure of $\SH_K(\GSpin(L_d))$ in $\Sh_K(\GSp(\Cl(L_d),\psi_\delta))$.
Then $\Sh_K(\GSpin(L_d))$ is a smooth integral canonical model for $\SH_K(\GSpin(L_d))$.
Further, the finite Galois cover $\SH_K(\GSpin(L_d)) \to \SH_K(\SO(L_d))$ extends to $\Sh_K(\GSpin(L_d)) \to \Sh_K(\SO(L_d))$,  where $\Sh_K(\SO(L_d))$ is a smooth integral canonical model for $\SH_K(\SO(L_d))$.
\end{thm}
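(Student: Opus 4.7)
The plan is to package this as an instance of the Kisin--Vasiu construction of integral canonical models for Hodge type Shimura varieties, applied to the Hodge embedding $(\GSpin(L_d \otimes \Q), \Omega) \hookrightarrow (\GSp(\Cl(L_d \otimes \Q),\psi_\delta), \HH_\delta)$ of the previous lemma. Since the Siegel integral model $\Sh_K(\GSp(\Cl(L_d)),\psi_\delta)$ is given directly by the moduli problem of polarised abelian schemes with prime-to-$p$ level structure, I take $\Sh_K(\GSpin(L_d))$ to be the Zariski closure of $\SH_K(\GSpin(L_d))$ inside it, as in the statement. This is automatically flat over $\Z_{(p)}$ with the correct generic fibre, so the tasks reduce to verifying (a) smoothness and (b) the extension property of Definition \ref{dfn:ext}.

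The hard step is smoothness, and I expect it to be the main obstacle. At each closed point $x$ of the Zariski closure over $\F_p$, I would pull back the universal abelian scheme from the Siegel model to obtain a $p$-divisible group $G_x$ together with a collection of crystalline Tate-type tensors $s_\alpha$ whose stabiliser cuts out the subgroup $\GSpin(L_d) \subseteq \GSp(\Cl(L_d),\psi_\delta)$; these are manufactured from $\PI$ and $\GA$ of Section \ref{sec:bundles}. Following Kisin's machinery, one identifies the completed local ring at $x$ with the deformation ring of the pair $(G_x, \{s_\alpha\})$, and uses integral $p$-adic Hodge theory (Breuil--Kisin modules equipped with a $\GSpin$-torsor structure of type $\mu$) to show this ring is formally smooth over $W(k(x))$ of dimension equal to $\dim \SH_K(\GSpin(L_d))$. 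The key technical points are to verify that the tensors $s_\alpha$ extend uniquely to the universal deformation and that the resulting reductive-group-valued deformation problem is formally smooth; for $p = 2$ this requires the additional refinements of Kim--Madapusi-Pera.

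For the extension property, let $S$ be a regular, formally smooth $\Z_{(p)}$-scheme and let $f_\Q : S \otimes \Q \to \SH_K(\GSpin(L_d))$ be given. Post-composing with the Hodge embedding, the induced map to $\SH_K(\GSp(\Cl(L_d)),\psi_\delta)$ extends to $S$ because the Siegel scheme represents a moduli problem and the Néron--Ogg--Shafarevich criterion applies to abelian schemes over regular, formally smooth bases. Since $S$ is reduced and the image of $S \otimes \Q$ lies in $\SH_K(\GSpin(L_d))$, the extension factors through the Zariski closure $\Sh_K(\GSpin(L_d))$. Combined with smoothness, this simultaneously gives the extension of $f_\Q$ and identifies the Zariski closure as the (unique) integral canonical model.

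Finally, for the $\SO$ case I would exploit the central extension $1 \to \G_m \to \GSpin(L_d) \xrightarrow{\ad} \SO(L_d) \to 1$: with compatible choices of level subgroup, the natural morphism $\SH_K(\GSpin(L_d)) \to \SH_K(\SO(L_d))$ is a finite étale cover by a finite group $\Delta$ arising from the centre. Define $\Sh_K(\SO(L_d))$ as the quotient $\Sh_K(\GSpin(L_d))/\Delta$; for $K$ sufficiently small the $\Delta$-action is free on the integral model, so the quotient is smooth, and both the extension property and the canonical model property transport under finite étale quotients. This realises the finite Galois cover as the generic fibre of $\Sh_K(\GSpin(L_d)) \to \Sh_K(\SO(L_d))$, completing the picture.
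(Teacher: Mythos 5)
This theorem is imported from the literature (Madapusi-Pera, Kisin, Vasiu) and the paper offers no proof of its own, only the remark immediately following it, which flags exactly the point where your outline has a genuine gap. Your sketch of the Kisin--Vasiu machinery is broadly the right skeleton: Hodge embedding into the Siegel datum, crystalline tensors cutting out $\GSpin$, deformation theory of $p$-divisible groups with tensors, the extension property via extending abelian schemes over regular formally smooth $\Z_{(p)}$-schemes, and the $\SO$ model as a quotient of the $\GSpin$ model by the finite group coming from the centre. All of that matches the references.

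The gap is in your step ``one identifies the completed local ring at $x$ with the deformation ring of the pair $(G_x,\{s_\alpha\})$'' applied to the Zariski closure itself. Kisin's argument does not do this for the closure: the integral canonical model in \cite{kisin10} and \cite{pera16} is defined as the \emph{normalisation} of the Zariski closure, and it is the completed local ring of the normalisation that is identified with the formally smooth tensor-deformation ring. A priori the closure could fail to be normal, in which case your identification breaks down and the closure would not be smooth. The statement as given in this paper --- that the Zariski closure, with no normalisation, is already the smooth integral canonical model --- requires the additional input of \cite[Theorem 1.1.1]{xu20}, which shows the closure is normal and hence coincides with its normalisation. Without invoking that result (or supplying an independent argument that the closure is normal), your proof establishes the theorem only for the normalisation of the closure, not for the closure itself. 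The rest of your outline (extension property via the Siegel moduli interpretation, and the $\SO$ case via a free action of the central quotient group for $K$ small) is consistent with how the cited sources proceed.
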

\begin{rmk}
In the above references in fact the integral model is constructed as the normalisation of the Zariski closure.
However, \cite[Theorem 1.1.1]{xu20} shows that the closure is already smooth and thus the normalisation step is unecessary.
\end{rmk}
The polarised abelian scheme discussed in section \ref{sec:bundles} extends to $(\Aa^\KS, \lambda^\KS, \eps^\KS)$ over this model.
Further, the $\Z/2\Z$-grading and $\Cl(L_d)$-action extend to $\Aa^\KS$ by the theory of N\'eron models, see \cite{pera16} 4.5, \cite{faltingschai90} I.2.7.

\subsection{The Period Morphism}

To discuss the moduli of K3 surfaces, we begin by extending Definition \ref{def:k3} to families.
We also introduce some additional structure for our moduli problem to allow us to relate it to Shimura varieties.
\begin{dfn}
A K3 surface $f: X \to S$ over a scheme $S$ is a smooth and proper algebraic space such that the geometric fibres are K3 surfaces.
A polarisation is a section $\xi \in \underline{\Pic}(X/S)(S)$ whose fibre at each geometric point is an ample line bundle.
A quasi-polarisation is a section $\xi \in \underline{\Pic}(X/S)(S)$ whose fibre at each geometric point is a big and nef line bundle.
If $\xi(s)$ is degree $2d$ for all geometric points $s \to S$, we say $\xi$ has degree $2d$.
If $\xi(s)$ is primitive for all geometric points $s \to S$, we say $\xi$ is primitive.
\end{dfn}
We have a functor which sends each $\Z[1/2]$-scheme $S$ to the set of pairs $(f: X \to S, \xi)$ where $f:X \to S$ is a K3 surface and $\xi$ is a primitive quasi-polarisation of deg $2d$.
This is represented by a Deligne-Mumford stack $M_{2d}$ of finite type over $\Z$.
There is an open substack $M_{2g}^\circ$ on which $\xi$ is a polarisation.

We have the universal object $(\X \to M_{2d}, \XI)$ over $M_{2d}$.
We get vector bundles $\Pp^2_\dR, \Pp^2_B, \Pp^2_\ell, \Pp^2_\mathrm{cris}$ over $M_{2d}, M_{2d,\C}, M_{2d,\Z[1/2\ell]}, M_{2d,\F_p}$, respectively, arising from the second primitive cohomology of $\X$.
That is $\Pp^2_\bullet = \langle \cl(\XI) \rangle^\bot$, i.e. the complement of the class of $\XI$ in the second cohomology.
Note that we can also package the $\ell$-adic cohomology into $\Pp^2_{\hat\Z} = \prod_\ell \Pp^2_\ell$.
We have similar constructions for the full second cohomology, denoted $\V_\bullet$.

We may now introduce the notion of a $\gamma$-orientation.
\begin{dfn}
Fix $\gamma \in \det(L_d)$.
Let $\tilde M_{2d,\gamma}$ over $\Z[1/2]$ be the Deligne-Mumford stack such that for a scheme $T$, the points in $\tilde M_{2d}(T)$ are pairs $(T \to M_{2d}, \BE_T)$, where $\BE_T \in H^0(T,\det(\Pp^2_{2,T}))$ such that $\langle \BE_T, \BE_T \rangle$ is the constant section $\langle \gamma,\gamma \rangle$.
We call $\tilde M_{2d,\gamma}$ the space of \emph{$\gamma$-oriented quasi-polarised K3 surfaces of deg $2d$}.
\end{dfn}
The map that forgets $\BE_T$ gives a degree 2 \'etale cover $\tilde M_{2d,\gamma} \to M_{2d}$.

Let $K$ be an \emph{admissible} compact open subgroup of $\SO(L_d)(\A_f)$, i.e. one that stabilises $L \otimes \hat\Z$.
Analogously to the case of abelian varieties, we define a $K$-level structure on $\tilde M_{2d}$ over a base $T$ by first considering the set
\[
I  = \{ \text{isometries } \eta : L \otimes \hat\Z \xrightarrow{\sim} \V^2_{\hat\Z} \text{ such that } \eta(e-df) = \mathrm{ch}_{\hat\Z}(\XI), (\wedge^m \eta)(\delta \otimes 1) = (\BE_{\hat\Z}) \}
\]
on which $K$ acts.
Then a section $\ET \in H^0(T, I/K)$ is a $K$-level structure over $T$.

As in \cite[Proposition 3.10]{milne94}, \cite[Proposition 3.3]{pera15}, the Shimura variety $\SH(\SO(L_d))_\C$ admits a modul interpretation in terms of certain Hodge structures (\emph{families of $\Z$-motives}, in the language of the latter).
In particular, it admits a universal property which induces a map $\iota_\C: \tilde M_{2d,\gamma,\C} \to \SH(\SO(L_d))_\C$, see \cite[Proposition 4.2]{pera15}.
Then \cite[Corollary 4.4]{pera15} allows us to extend $\iota_\C$ first to $\iota_\Q$.
Similar results may also be found in \cite[Theorem 3.16]{rizov05} and \cite[Proposition 5.7]{maulik14}.
Recalling that the integral model $\Sh(\SO(L_d))$, by Theorem \ref{thm:integral}, satisfies the extension property (see Definition \ref{dfn:ext}), this results in the \emph{period morphism}
\[
\iota: \tilde M_{2d,\gamma,K} \longrightarrow \Sh_K(\SO(L_d)).
\]
We now recall some important properties.
\begin{thm}[\cite{pera15} Theorem 4.8, Corollary 4.15]
The map $\iota$ is \'etale.
The induced map $\iota : M^\circ_{2d} \to \Sh(\SO(L_d))$ is an open immersion.
\end{thm}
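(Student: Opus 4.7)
My plan is to establish the two assertions separately: first \'etale-ness of $\iota: \tilde M_{2d,\gamma,K} \to \Sh_K(\SO(L_d))$, which is a local question on tangent spaces, and then the open immersion property on $M^\circ_{2d}$, for which I would combine \'etale-ness with universal injectivity on geometric points (\'etale plus radicial on a locally finite type morphism yields an open immersion).

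For \'etale-ness, I would compare tangent spaces via deformation theory at each geometric point $s$ corresponding to a quasi-polarised K3 surface $(X,\xi)$ together with its auxiliary data. The $\gamma$-orientation and level structure are discrete and rigid, so the Kodaira-Spencer map identifies $T_s \tilde M_{2d,\gamma,K}$ with the space of first-order deformations of $(X,\xi)$, i.e.\ a canonical subspace of $H^1(X, T_X)$. On the target, since $\Sh_K(\SO(L_d))$ is an integral canonical model by Theorem \ref{thm:integral}, the tangent space at $\iota(s)$ classifies infinitesimal deformations of the Hodge (or crystalline) filtration on the primitive cohomology $PH^2$. The differential $d\iota$ is then the period map on tangent vectors: it sends an infinitesimal deformation of $X$ to the induced variation of the filtration on $PH^2$. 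Using the triviality of $\omega_X$, which identifies $T_X \cong \Omega^1_X$, together with the cup product pairing, $d\iota$ becomes exactly the isomorphism furnished by the local Torelli theorem for K3 surfaces. In characteristic $p$ the same computation runs with crystalline cohomology: deformations of $X$ are again governed by lifts of the Hodge filtration on $H^2_{\mathrm{cris}}(X/W)$, and this matches the deformation theory of $\Sh_K(\SO(L_d))$ by construction of the integral canonical model.

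For the open immersion assertion, restriction to $M^\circ_{2d}$ together with \'etale-ness reduces the problem to showing that $\iota$ is universally injective on geometric points of the polarised locus. Over $\C$, this is precisely the Global Torelli theorem of Piatetski-Shapiro and Shafarevich: a complex polarised K3 surface is determined up to isomorphism by its polarised Hodge structure on $H^2$, hence by its image in $\SH(\SO(L_d))(\C)$. Injectivity over $\Q$ follows by base change to $\C$. In characteristic $p$ I would invoke Deligne's lifting theorem: given $(X_1,\xi_1), (X_2,\xi_2)$ in characteristic $p$ with $\iota(X_1,\xi_1) = \iota(X_2,\xi_2)$, one lifts both compatibly with the common cohomological datum to a mixed-characteristic base, applies Global Torelli to the lifts to produce an isomorphism there, and specialises back to obtain an isomorphism in characteristic $p$. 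Combined with openness from \'etale-ness, this yields the open immersion.

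The main obstacle is the characteristic-$p$ injectivity step, because matching the lifts requires controlling not only the surfaces and polarisations but also the $\gamma$-orientation and level structure simultaneously; this is where the rigidifying tensor package $(\Hh, \PI, \GA)$ on the Kuga-Satake abelian scheme $\Aa^\KS$ becomes indispensable, as it encodes the comparison data one needs to propagate an isomorphism of Hodge/crystalline structures back to an isomorphism of K3 surfaces. The restriction to the \emph{polarised} locus $M^\circ_{2d}$ is essential: on the quasi-polarised locus, two K3 surfaces related by a birational contraction of $(-2)$-curves can share the same period data without being isomorphic, so the map is genuinely only \'etale, and not an open immersion, on all of $\tilde M_{2d,\gamma,K}$.
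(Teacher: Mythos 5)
The paper does not prove this statement; it is imported verbatim from \cite{pera15} (Theorem 4.8, Corollary 4.15), so your proposal must be measured against Madapusi-Pera's argument rather than anything internal to this text. At the level of architecture you have it right: \'etaleness via a Torelli-type comparison of deformation spaces, then open immersion from \'etale plus universal injectivity on geometric points, with global Torelli over $\C$ handling characteristic $0$. But the characteristic-$p$ \'etaleness is where essentially all the difficulty lives, and your sketch asserts it rather than proves it. The claim that the completed local ring of $\Sh_K(\SO(L_d))$ at a mod-$p$ point classifies deformations of the filtration on $PH^2_{\mathrm{cris}}$ is not part of the definition of the integral canonical model (which is characterised abstractly by the extension property of Definition \ref{dfn:ext}); it is a nontrivial output of Kisin's construction. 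The route actually taken is through the Kuga-Satake abelian scheme: deformations of a point of $\Sh_K(\GSpin(L_d))$ are controlled by Grothendieck-Messing theory for $\Aa^{\KS}$ together with its crystalline tensors, and these are matched with deformations of the quasi-polarised K3 surface via the crystalline comparison and Ogus's deformation theory of K3 crystals. ``The same computation runs with crystalline cohomology'' is precisely the content of the theorem, not a routine transposition.

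The characteristic-$p$ injectivity step has two concrete gaps. First, the two lifts must be chosen with the \emph{same} period point; the mechanism for this is not Deligne's lifting theorem (which only produces some projective lift of each surface separately, with no control on where its period lands) but the already-established \'etaleness of $\iota$: both formal neighbourhoods are identified with that of the common image point in $\Sh(\SO(L_d))$, so a single characteristic-$0$ point specialising to it induces compatible algebraisable lifts $\tilde X_1, \tilde X_2$ with equal periods. Second, ``specialises back to obtain an isomorphism'' is not automatic --- an isomorphism of generic fibres of two families over a DVR does not in general extend to the special fibres; you need the Matsusaka-Mumford theorem (for polarised, non-ruled smooth projective surfaces an isomorphism over the generic point specialises to an isomorphism), and this is exactly the missing input. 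With those repairs your outline matches \cite{pera15}; your closing observation that injectivity genuinely fails on the quasi-polarised locus because of contractions of $(-2)$-curves is correct and is the reason the open immersion is restricted to $M^\circ_{2d}$.
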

This also allows for comparisons of bundles, which will be a key ingredient for us to make statements about Galois representations.
\begin{prop}[\cite{pera15} 4.5]\label{prop:LpullbackP}
We have a compatible system of isometries of \'etale local systems $\alpha_\ell : \iota^\ast \LL_\ell \xrightarrow{\sim} \Pp^2_\ell$ on $\tilde M_{2d,\gamma}$.
\end{prop}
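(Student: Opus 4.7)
The plan is to construct $\alpha_\ell$ first on the complex analytic fibre via Hodge theory, then descend to the rational fibre by Galois equivariance, and finally extend to the integral model using smoothness.

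Over $\C$, the period morphism $\iota_\C$ is constructed so that on each geometric point $(X_s, \xi_s, \BE_s) \in \tilde M_{2d,\gamma}(\C)$ it records the polarised, $\gamma$-oriented weight-two Hodge structure $PH^2(X_s, \Z)$. Translating this universal property into a statement about local systems yields a tautological isometry of Betti local systems $\iota_\C^\ast \LL_B \xrightarrow{\sim} \Pp^2_B$ on $\tilde M_{2d,\gamma,\C}$. Applying the Artin comparison theorem fibrewise to the smooth proper families $\X \to \tilde M_{2d,\gamma}$ and $\Aa^\KS \to \Sh_K(\GSpin(L_d))$, and recalling from Section \ref{sec:bundles} that $\LL_\ell$ is cut out by $\PI_\ell$ inside tensors of the $\ell$-adic cohomology of $\Aa^\KS$, one obtains an isometry $\alpha_\ell^{\mathrm{an}} : \iota_\C^\ast \LL_\ell \xrightarrow{\sim} \Pp^2_\ell$ of $\ell$-adic local systems on $\tilde M_{2d,\gamma,\C}$. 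The quadratic forms match because both sides are induced by the cup product on primitive $H^2$ of the universal K3 via the respective comparison isomorphisms.

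Next I would descend this to the $\Q$-fibre. The period morphism $\iota$ is defined over $\Q$ by \cite{pera15}, Corollary 4.4, and both $\iota^\ast \LL_\ell$ and $\Pp^2_\ell$ live on $\tilde M_{2d,\gamma,\Q}$ as étale $\Q_\ell$-sheaves. Since the Artin comparison and the Kuga-Satake construction of $\LL_\ell$ are both $\Gal(\overline\Q/\Q)$-equivariant, the isometry $\alpha_\ell^{\mathrm{an}}$ commutes with the Galois action on $\tilde M_{2d,\gamma,\overline\Q}$ and hence descends to an isometry on $\tilde M_{2d,\gamma,\Q}$. To extend across all residue characteristics different from $2$ and $\ell$, I would use that $\tilde M_{2d,\gamma}$ is a smooth Deligne-Mumford stack over $\Z[1/2]$ with geometrically connected fibres: the Isom-scheme $\underline{\mathrm{Isom}}(\iota^\ast \LL_\ell, \Pp^2_\ell)$ is finite étale over the base, so the section over the generic fibre extends uniquely. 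Equivalently, one may invoke the extension property of $\Sh_K(\SO(L_d))$ from Theorem \ref{thm:integral}. Compatibility of the $\alpha_\ell$ across varying $\ell$ is automatic, since both sides arise fibrewise from the same Betti local system via the $\ell$-adic comparison functors.

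The main obstacle is the Galois-equivariance step: one must verify that the analytically manufactured isometry $\alpha_\ell^{\mathrm{an}}$ commutes with the Galois action on the étale site of $\tilde M_{2d,\gamma,\Q}$. This rests on the algebraicity of both $\iota$ and the Kuga-Satake abelian scheme $\Aa^\KS$ (together with its $\Cl(L_d)$-action) over $\Q$, so that the comparison isomorphisms entering the construction of $\alpha_\ell^{\mathrm{an}}$ are themselves Galois-compatible. Once this is in hand, the integral extension is a routine smoothness-and-density argument.
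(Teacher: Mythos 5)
The paper does not prove this proposition at all: it is imported verbatim as a citation to Madapusi-Pera (\cite{pera15}, 4.5), so there is no internal proof to compare against. Your outline does broadly track the strategy of the cited proof -- construct the isometry over $\C$ from the tautological property of the period map, observe that both sides are \'etale local systems over the arithmetic base, and extend using that $\tilde M_{2d,\gamma}$ is normal and connected so that $\pi_1$ of the generic fibre surjects onto $\pi_1$ of the integral model. Two small technical corrections to that last step: the ``finite \'etale Isom-scheme'' argument should be run at finite level (for the $\Z/\ell^n$-sheaves, or for $\Z_\ell$-lattices) rather than for lisse $\Q_\ell$-sheaves directly, and the extension property of $\Sh_K(\SO(L_d))$ from Theorem \ref{thm:integral} is not the right tool here -- it governs extending morphisms from regular formally smooth $\Z_{(p)}$-schemes into the Shimura variety, not extending sections of \'etale sheaves; the correct mechanism is the surjection $\pi_1(\tilde M_{2d,\gamma,\eta}) \twoheadrightarrow \pi_1(\tilde M_{2d,\gamma})$ for a normal connected base.

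The genuine gap is the step you flag as ``the main obstacle'' and then do not close: the descent of $\alpha_\ell^{\mathrm{an}}$ from $\tilde M_{2d,\gamma,\C}$ to $\tilde M_{2d,\gamma,\Q}$, i.e.\ equivariance under the full arithmetic fundamental group rather than just the geometric one. Saying this ``rests on the algebraicity of $\iota$ and $\Aa^\KS$ over $\Q$, so that the comparison isomorphisms are Galois-compatible'' is an assertion of exactly the nontrivial content, not an argument: the Artin comparison identifies $\Pp^2_B\otimes\Q_\ell$ with $\Pp^2_\ell|_{\C}$ and $\Hh_B\otimes\Q_\ell$ with $\Hh_\ell|_{\C}$ separately, but it does not by itself tell you that the specific Betti isometry $\iota_\C^\ast\LL_B \xrightarrow{\sim} \Pp^2_B$ produced by the Hodge-theoretic universal property is carried to a map of \emph{arithmetic} \'etale sheaves. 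In \cite{pera15} this is precisely where the motivic moduli interpretation of $\SH(\SO(L_d))$ (families of absolute-Hodge/``$\Z$-motives'', cf.\ Propositions 4.2 and Corollary 4.4 cited in the paper) is used: the descent of $\iota$ to $\Q$ is constructed \emph{together with} the compatibility of the $\ell$-adic realisations, rather than the compatibility being deduced afterwards from algebraicity. Without that input, or without an alternative rigidity argument pinning down the isometry at a single arithmetic point (e.g.\ a CM point) where both Galois representations are known, the descent step does not go through as written.
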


\subsection{The Kuga-Satake abelian variety}

The setup above leads to the following theorem.
\begin{thm}\label{thm:peramain}
Let $F$ be a field of characteristic $p \neq 2$ and $(X,\xi)$ a polarised K3 surface over $F$.
Passing to a finite separable extension if necessary, there exists an abelian variety $A/F$, called the \emph{Kuga-Satake abelian variety} such that:
\begin{enumerate}[(1)]
\item $A$ has a $\Z/2\Z$-grading and an action of the Clifford algebra $\Cl(L_d)$.
\item For each $\ell \neq p$ there is an isomorphism of $\Q_\ell$-vector spaces
\[
H^1(A_{F^s},\Q_\ell) \xrightarrow{~\sim~} \Cl(PH^2(X_{F^s},\Q_\ell)).
\]
\item The image of the Galois representation $\tilde\rho : G_F \to \GSp(H^1(A_{F^s},\Q_\ell))$ lands in
\[
\GSpin(PH^2(X_{F^s},\Q_\ell)) \subseteq \GSp(H^1(A_{F^s},\Q_\ell(1))).
\]

\item The image of the Galois representation $\rho: G_F \to \GL(PH^2(X_{F^s},\Q_\ell(1))$ lands in
\[
\SO(PH^2(X_{F^s},\Q_\ell(1))) \subseteq \GL(PH^2(X_{F^s},\Q_\ell(1))).
\]
Further, we have an equality $\ad \circ \tilde\rho = \rho$, where $\ad : \GSpin \to \SO$ is the adjoint map.
\end{enumerate}
\end{thm}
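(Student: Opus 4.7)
The plan is to manufacture $A$ as the pullback of the universal Kuga--Satake abelian scheme $\Aa^\KS$ along an appropriate $F$-point of $\Sh_K(\GSpin(L_d))$ obtained from $X$ via the period morphism. After enlarging $F$ by a finite separable extension, I would first promote $(X,\xi)$ to an $F$-point of $\tilde M_{2d,\gamma,K}$ by choosing a $\gamma$-orientation and a $K$-level structure; both are possible after a finite separable extension because the forgetful maps $\tilde M_{2d,\gamma,K}\to\tilde M_{2d,\gamma}\to M_{2d}$ are finite étale. Applying the period morphism $\iota$ then yields a point $s\in\Sh_K(\SO(L_d))(F)$. By Theorem \ref{thm:integral} the projection $\Sh_K(\GSpin(L_d))\to\Sh_K(\SO(L_d))$ is a finite étale Galois cover, so after a further finite separable extension of $F$ we may lift $s$ to $\tilde s\in\Sh_K(\GSpin(L_d))(F)$, and we set $A=\tilde s^*\Aa^\KS$.

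Claims (1) and (2) are then a matter of unpacking the structures on $\Aa^\KS$ at $\tilde s$. The $\Z/2\Z$-grading and right $\Cl(L_d)$-action on $\Aa^\KS$, which extend to the integral model by \S\ref{sec:shimura}, pull back to $A$, giving (1). For (2), the universal identification on $\Sh_K(\GSpin(L_d))$ of $R^1 f^\KS_{\et,*}\Q_\ell$ with the bundle $\Hh_\ell$ coming from the $\GSpin(L_d)$-representation on $\Cl(L_d)$ by right multiplication pulls back at $\tilde s$ to an isomorphism $H^1(A_{F^s},\Q_\ell)\cong\Cl(L_d)\otimes\Q_\ell$. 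Proposition \ref{prop:LpullbackP} applied at $s$ gives an isometry $L_d\otimes\Q_\ell\xrightarrow{\sim}PH^2(X_{F^s},\Q_\ell)$, and extending by functoriality of the Clifford construction yields the desired isomorphism with $\Cl(PH^2(X_{F^s},\Q_\ell))$.

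For (3) and (4), the Galois representation $\tilde\rho$ on $H^1(A_{F^s},\Q_\ell)$ is the monodromy at $\tilde s$ of the local system $\Hh_\ell$, which by construction has image in $\GSpin(L_d)(\Q_\ell)\subseteq\GSp(\Cl(L_d)\otimes\Q_\ell,\psi_\delta)(\Q_\ell)$; this is (3). Composing with the adjoint map $\ad:\GSpin(L_d)\to\SO(L_d)$ produces the monodromy of $\LL_\ell$ at $s$, which via $\alpha_\ell$ is identified with the Galois representation $\rho$ on $PH^2(X_{F^s},\Q_\ell(1))$, with image in $\SO$. This gives (4), including the equality $\ad\circ\tilde\rho=\rho$.

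The main obstacle is not in any one step but in the compatibility of all these identifications as $\GSpin(L_d)$- and $\SO(L_d)$-equivariant maps, so that the diagrammatic chase in the previous paragraph actually transports the spinor-adjoint relationship to the spinor-orthogonal relationship on primitive cohomology. This compatibility is built into the Shimura-variety bundles of \S\ref{sec:bundles} and inherited through the integral models of \S\ref{sec:shimura}; once it is in hand, the theorem reduces to organizing the inputs into a coherent statement about a single $F$-valued point.
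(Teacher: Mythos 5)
Your proposal is correct and follows essentially the same route as the paper: lift $(X,\xi)$ through $\tilde M_{2d,\gamma}$ and the period morphism to an $F$-point of $\Sh_K(\GSpin(L_d))$ after finite separable extensions, pull back $\Aa^\KS$, and read off (1)--(4) from the universal structures together with Proposition \ref{prop:LpullbackP}. The only difference is that where you assert the monodromy of $\Hh_\ell$ lands in $\GSpin(L_d)(\Q_\ell)$ ``by construction,'' the paper spells this out via the tower of level covers $\Sh_{K(N)}(\GSpin(L_d)) \to \Sh_K(\GSpin(L_d))$ as $K/K(N)$-torsors whose fibres parametrise bases of $A[N]$.
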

\begin{proof}
This appears as Theorem 4.17 in \cite{pera15}.
Properties (1) and (2) can be read off directly from that theorem.

Consider the diagram.
\[
\xymatrix{
& \Sh(\GSpin(L_d)) \ar[r] \ar[d] & \Sh(\GSp(\Cl(L_d),\psi_\delta)) \\
\tilde M_{2d,\gamma} \ar[r] & \Sh(\SO(L_d))
}
\]
We begin with a $F$-valued point of $M^\circ_{2d}$.
This lifts to a point $s \in \tilde M_{2d,\gamma}$.
After passing to a finite extension of $F$, we may lift $\iota(s) \in \Sh(\SO(L_d))$ to an $F$-point $\widetilde{\iota(s)} \in \Sh(\GSpin(L_d))$.
Taking the fibre of $\Aa^\KS$ at this point gives the abelian variety $A = \Aa^\KS_{\widetilde{\iota(s)}}/F$.

To study the Galois representation on $H^1(A_{F^s},\Q_\ell)$, we first introduce some notation.
For a group scheme $G/\Z$ we define a subgroup $G(N) \subseteq G(\hat \Z)$ by the exact sequence
\begin{equation}
1 \longrightarrow G(N) \longrightarrow G(\hat \Z) \longrightarrow G(\Z/N\Z) \longrightarrow 1.
\end{equation}
For a subgroup $H \subseteq G(\AA_f)$ we set $H(N) = H \cap G(N)$.

Let $K \subseteq \GSpin(L_d)(\AA_f)$ and $\K \subseteq \GSp(\Cl(L_d),\psi_\delta)(\AA_f)$ be compatible choices of open compact subgroup, sufficiently small such that $\Sh_\K(\GSp(\Cl(L_d),\psi_\delta))$ acquires its moduli description.
For $N$ such that $p\nmid N$, consider the diagram 
\begin{equation}
\xymatrix{
\Sh_{K(N)}(\GSpin(L_d)) \ar[r] \ar[d]^{K/K(N)} & \Sh_{\K(N)}(\GSp(\Cl(L_d),\psi_\delta)) \ar[d]^{\K/\K(N)} \\
\Sh_K(\GSpin(L_d)) \ar[r] & \Sh_\K(\GSp(\Cl(L_d),\psi_\delta))
}
\end{equation}
where the vertical maps are a $K/K(N)$ torsor and $\K/\K(N)$ torsor, respectively.
The fibre of such a vertical map above a point $s$ with corresponding abelian variety $A$ parameterises bases for the $N$-torsion $A[N]$ which extend the given $\K$-level structure.

Returning to the situation of the theorem, we consider the image of the $F$-point $\widetilde{\iota(s)}$ in $\Sh_K(\GSpin(L_d))$.
The action of the Galois group $G_F$ on $\Sh_{K(N)}(\GSpin(L_d))$ preserves the fibre above $\widetilde{\iota(s)}$ and corresponds to the action on $A[N]$.
In particular, the action on $G_F$ on $T_\ell A = \displaystyle\lim_{\substack{\leftarrow\\ n}} A[\ell^n]$ factors through $\displaystyle\lim_{\substack{\leftarrow\\ n}} K/K(\ell^n) \subseteq \GSpin(L_d)(\Z_\ell)$, giving property (3) above.

Finally, by Proposition \ref{prop:LpullbackP} we have an identification between $\rho^\ast \LL_\ell$ and $\Pp^2_\ell$ as bundles on $\tilde M_{2d,\gamma}$.
Further, we have $\LL_\ell = \PI_\ell (\Hh_\ell)$, in particular the finite cover $\Sh_{K_0(N)}(\SO(L_d)) \to \Sh_{K_0}(\SO(L_d))$ is a torsor for $K_0/K_0(N)$, identified with the image of $K/K(N)$ under the adjoint action, and property (4) follows.
\end{proof}

\section{The main theorem}\label{sec:thm}

We provide the remaining details of the proof of Theorem \ref{thm:K3descent}.
The supersingular case was discussed in Section \ref{sec:super}.
Below we cover the cases of $\ch K = 0$ or $X$ ordinary in positive characteristic.

\subsection{Passing the descent}

This begins with the following lemma.
\begin{lem}\label{lem:desc}
Let $K/k$ be a regular extension.
Let $X/K$ be a K3 surface and denote $\rho: G_K \to \GL\left(PH^2(X_{K^s}, \Q_\ell)\right)$ for $\ell \neq \ch K$.
Assume $\rho(\Gal(K^s / k^s K)) = 1$.
Passing to a finite extension if necessary, let $A/K$ be the Kuga-Satake abelian variety with $\tilde\rho : G_K \to \GL\left(H^1(A_{K^s},\Q_\ell)\right)$.
Then $\tilde\rho(\Gal(K^s / k^s K')) = 1$ for some $[K':K] \leq 2$.
\end{lem}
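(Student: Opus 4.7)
The plan is to show that the restriction of $\tilde\rho$ to $H_K := \Gal(K^s/k^sK)$ takes values in the central $\G_m \subset \GSpin$, and then that the resulting character lands in $\{\pm1\}$, from which a short Galois-theoretic argument produces $K'$.

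First I would invoke Theorem \ref{thm:peramain}(4), which gives $\ad \circ \tilde\rho = \rho$, where $\ad : \GSpin \to \SO$ is the adjoint map with kernel the center $\G_m$. Combined with the hypothesis $\rho(H_K) = 1$, this forces $\tilde\rho(H_K) \subseteq \G_m(\Q_\ell) = \Q_\ell^\times$, and I would denote the resulting continuous character by $\chi : H_K \to \Q_\ell^\times$. Since $\G_m$ is central in $\GSpin$, the character $\chi$ is automatically $G_K$-invariant under conjugation, so $\ker\chi$ is a closed normal subgroup of $G_K$.

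Next I would compute $\nu \circ \tilde\rho$ on $H_K$. By Lemma \ref{lem:spinsymp} the spinor norm agrees with the similitude character under the embedding $\GSpin(L_d) \hookrightarrow \GSp(\Cl(L_d),\psi_\delta)$, and on the center $\G_m$ it is the squaring map. On the other hand, under the Kuga-Satake construction (Theorem \ref{thm:peramain}(3)) the similitude character of $\tilde\rho$ on $H^1(A_{K^s},\Q_\ell)$ agrees with the cyclotomic character $\chi_{\mathrm{cyc}}$, up to the fixed Tate twist appearing in the theorem. Since all $\ell$-power roots of unity are separable and hence lie in $k^s$, the cyclotomic character is trivial on $H_K$. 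Therefore $\chi(g)^2 = \nu(\tilde\rho(g)) = 1$ for all $g \in H_K$, so $\chi$ takes values in $\{\pm 1\}$.

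Finally I would produce the field $K'$. If $\chi$ is the trivial character I take $K' = K$, and there is nothing to check. Otherwise $\ker\chi$ has index $2$ in $H_K$ and, by the $G_K$-invariance noted above, is normal in $G_K$. Let $L_\chi := (K^s)^{\ker\chi}$; then $L_\chi/K$ is Galois, $L_\chi \supseteq k^sK$, and $[L_\chi : k^sK] = 2$. It suffices to exhibit a quadratic extension $K' \subset K^s$ of $K$ with $k^s K' = L_\chi$, for then $\Gal(K^s/k^sK') = \ker\chi$ and so $\tilde\rho(\Gal(K^s/k^sK')) = \chi(\ker\chi) = 1$ as required.

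The main obstacle I expect is precisely the last step: producing the quadratic descent of $L_\chi$ to $K$. Via Kummer theory, $\chi$ corresponds to the class of some $a \in (k^sK)^\times$ in $(k^sK)^\times/(k^sK)^{\times 2} \cong H^1(H_K,\{\pm 1\})$, and the $G_K$-invariance places this class in the $G_k$-invariants. The inflation-restriction sequence
\[
1 \to H^1(G_k,\{\pm 1\}) \to H^1(G_K,\{\pm 1\}) \to H^1(H_K,\{\pm 1\})^{G_k} \to H^2(G_k,\{\pm 1\})
\]
shows that the obstruction to lifting $\chi$ to a character $\eta : G_K \to \{\pm 1\}$ lies in $H^2(G_k,\{\pm 1\})$; if this obstruction vanishes then $K' := (K^s)^{\ker\eta}$ is the desired quadratic extension. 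The remaining work is to verify that this obstruction does vanish in our setting, either by a direct Galois-cohomological argument or by using that $\tilde\rho$ is already defined on all of $G_K$ (with values in $\GSpin$), so that the central extension $1 \to \langle\pm 1\rangle \to \Gal(L_\chi/K) \to G_k \to 1$ admits a section constructed from $\tilde\rho$ itself.
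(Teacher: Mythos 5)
Your proposal follows the paper's proof essentially step for step: Theorem \ref{thm:peramain}(4) plus the central extension $1 \to \G_m \to \GSpin \to \SO \to 1$ forces $\tilde\rho(\Gal(K^s/k^sK)) \subseteq \G_m(\Q_\ell)$, and the identification of the spinor norm with the symplectic similitude character (Lemma \ref{lem:spinsymp}), hence with the cyclotomic character, which is trivial on $\Gal(K^s/k^sK)$ because all $\ell$-power roots of unity lie in $k^s$, gives $\tilde\rho(g)^2 = 1$. Where you diverge is only at the very end, and there you are in fact \emph{more} careful than the paper: the paper simply asserts ``thus the action factors through an extension $[K':K]\leq 2$,'' whereas you correctly observe that extending the order-two character $\chi$ from $\Gal(K^s/k^sK)$ to $G_K$ (equivalently, descending the quadratic extension $L_\chi/k^sK$ to a quadratic extension of $K$) carries an obstruction in $H^2(G_k,\Z/2\Z)$ that is not obviously zero. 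You should know that this unresolved point is harmless for everything downstream: one can always take $K' = K(\alpha)$ for $\alpha$ a primitive element of $L_\chi$ over $k^sK$, which is a finite (though possibly non-quadratic) extension of $K$ with $k^sK' = L_\chi$ and hence $\tilde\rho(\Gal(K^s/k^sK'))=1$; since the main theorem permits arbitrary finite extensions of $K$ and $k$, the precise bound $[K':K]\leq 2$ is a cosmetic refinement rather than a load-bearing one.
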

\begin{proof}
By Theorem \ref{thm:peramain} property (3), we know that $\tilde\rho(G_K) \subseteq \GSpin(L)(\Q_\ell)$.
Combining property (4) with the short exact sequence for $\ad$, we have the following commutative diagram.
\[
\xymatrix{
& \Gal(K^s / k^s K) \ar[ld] \ar[d]^{\tilde\rho} \ar[rd]^\rho & \\
\G_m(\Q_\ell) \ar[r] & \GSpin(L)(\Q_\ell) \ar[r]^{\ad} & \SO(L)(\Q_\ell)
}
\]
Hence we see that $\ad(\tilde\rho(\Gal(K^s / k^s K))) = \rho(\Gal(K^s / k^s K)) = 1$ and thus $\tilde\rho(\Gal(K^s / k^s K)) = \ker \ad = \G_m$.

Let $\mu : \GSp(H^1(A,\Q_\ell)) \to \Q_\ell^2$ be the symplectic similitude character.
Then we have $\mu \circ \rho = \chi_{\text{cycl}} : G_{K/k} \to \Q_\ell^\times$, the $\ell$-adic cyclotomic character.
However, by Lemma \ref{lem:spinsymp} we know that the symplectic similitude character coincides with the spinor norm $\nu$ on $\GSpin(L)(\Q_\ell) \subseteq \GSp(H^1(A,\Q_\ell))$.
Further, on $\ker \ad \cong \G_m$ we have $\nu|_{\ker \ad} : x \mapsto x^2$.
Thus we conclude for $g \in G_{K/k}$ that
\[
\tilde\rho(g)^2 = \nu(\tilde\rho(g)) = \mu(\tilde\rho(g)) = \chi_{\text{cycl}}(g).
\]
However, note that since $g \in G_{K/k}$ it fixes $k^s$ which necessarily contains all roots of unity, hence $\tilde\rho(g)^2 = \chi_{\text{cycl}}(g) = 1$.
Thus the action factors through an extension $[K':K] \leq 2$.
Then $\tilde\rho(\Gal(K^s / k^s K')) = 1$, as required.
\end{proof}
Hence, replacing $K$ with $K'$, we conclude that if $X$ satisfies $K/k$-Galois-$\ell$-descent, the same is true for its Kuga-Satake abelian variety $A$.
We can then apply Grothendieck's result, Theorem \ref{thm:groth}, to find an abelian variety $A_0/k$ and an isogeny $A_0 \otimes K \to A$.

\subsection{The cases of $X$ ordinary or $\ch K = 0$}

\begin{proof}[Proof of Theorem \ref{thm:K3descent}]
We begin with $X/K$ a K3 surface such that $\rho'(\Gal(K^s/k^sK)) = 1$, where $\rho' : G_K \to \Aut(H^2(X_{K^s},\Q_\ell))$.
Choose any polarisation $\xi$ on $X$, allowng us to define $PH^2$, and we can conclude $\rho(\Gal(K^s/k^sK)) = 1$, where $\rho : G_K \to \Aut(PH^2(X_{K^s},\Q_\ell))$.
Let $d = \deg(\xi)$.
Then $(X,\xi)$ gives a $K$-point $s \in M^\circ_{2d}(K)$.
As in Theorem \ref{thm:peramain} we have, after finite extension of $K$, a point $\widetilde{\iota(s)} \in \Sh(\GSpin(L_d))(K)$ and the Kuga-Satake abelian variety $A/K$ with $\tilde\rho : G_K \to \Aut(H^1(A_{K^s},\Q_\ell))$.
By Lemma \ref{lem:desc}, after quadratic extension of $K$ we have $\tilde\rho(\Gal(K^s/k^sK)) = 1$.
Thus, by Theorem \ref{thm:groth}, there exists an abelian variety $A_0/k$ and an isogeny $A_0 \otimes K \to A$.

Further, $A/K$ is equipped with extra data, including level structure and polarisation.
By independence of $\ell$ we have that $\Gal(K^s/k^sK)$ acts trivially on $T_\ell A$ for each $\ell$.
The level structure takes image in $\prod_{\ell' \neq p} T_{\ell'} A$, and is thus fixed by $\Gal(K^s/k^sK)$, thus descends to $k^s$.
The polarisation is a prime-to-$p$ isogeny $\lambda : A \to A^\vee$.
Factor $\ker \lambda$ into $\ell'$-primary components $(\ker \lambda)_{\ell'}$ for each $\ell' \neq p$.
Then $(\ker \lambda)_{\ell'}$ is a subscheme of $A[\ell'^n]$ for some $n$, which is fixed by $\Gal(K^s/k^sK)$.
Thus these are all defined over $k^s$ and thus the polarisation descends to $k^s$.

Now assume we are in case (1), i.e. $\ch K = 0$.
Then by Remark \ref{rmk:groth}, in fact we have an isomorphism $A_0 \otimes K \xrightarrow{\sim} A$.
To show the point $\widetilde{\iota(s)}$ is in fact a $k$-point, we first note that by choosing our level structure is sufficiently small, we find that the map $\Sh(\GSpin(L_d)) \to \Sh(\GSp((\Cl(L_d)))$ is an embedding.
Thus to argue descent of the point it is sufficient to consider the image, that is, we need to show the level structure and polarisation descend, as is shown above.
We therefore conclude $\widetilde{\iota(s)}$ was in fact a $k$-point, and after finite extension of $k$ we may assume the same of $s$.

Now assume we are in case (2), i.e. $X/K$ is ordinary.
By \cite{nygaard83} Prop 2.5 we have that $A/K$ is an ordinary abelian variety.
See also \cite{rizov10} Rmk 7.1.2 and \cite{pera15} 4.21.
Then by Remark \ref{rmk:groth}, in fact we have an isomorphism $A_0 \otimes K \xrightarrow{\sim} A$.
The argument above is again sufficient to conclude $\widetilde{\iota(s)}$ was in fact a $k$-point, and after finite extension of $k$ we may assume the same of $s$.

Thus, having taken suitable extensions of $k$ and $K$, $s$ is a $k$-point and we have $X_0/k$ with an isomorphism $X_0 \otimes K \xrightarrow{\sim} X$.
\end{proof}

\bibliographystyle{alpha}
\bibliography{galois-descent-bib}

\end{document}